\date{ }
\providecommand\bnabla{\ensuremath{\nabla}}
\providecommand\bcdot{\ensuremath{\cdot}}
\providecommand\bu{\ensuremath{u}}
\providecommand\bcolon{\ensuremath{\colon}}
\providecommand\rmDelta{\ensuremath{\Delta}}
\author[1]{Fabian Bleitner}
\author[2]{Camilla Nobili}
\affil[1]{\small{University of Hamburg, Department of Mathematics, Bundesstraße 55, 20146 Hamburg, Germany}}
\affil[2]{\small{University of Surrey, School of Mathematics and Physics, Guildford, Surrey GU2 7XH, United Kingdom}}
\newtheorem{lemma}{Lemma}
\newtheorem{remark}{Remark}
\newcommand{\RomanNumeralCaps}[1]
\newcommand{\Nu}{\mbox{\textit{Nu}}}    
\newcommand{\Ra}{\mbox{\textit{Ra}}}    
\newcommand{\Pra}{\mbox{\textit{Pr}}}   
\renewcommand{\Pr}{\Pra}
\newtheorem{theorem}{Theorem}
\newcommand{\colorBig}{teal}
\newcommand{\colorBoth}{black}
\newcommand{\colorSmall}{violet}
\title{Scaling laws for Rayleigh-B\'enard convection between Navier-slip boundaries}
\begin{document}
\maketitle

\begin{abstract}
We consider the two-dimensional Rayeigh-B\'enard convection problem between Navier-slip fixed-temperature boundary conditions and present a new upper bound for the Nusselt number. 
The result, based on a localization principle for the Nusselt number and an interpolation bound, exploits the regularity of the flow. On one hand our method yields a shorter proof of the celebrated result in \cite{whiteheadDoeringUltimateState} in the case of free-slip boundary conditions. On the other hand, its combination with a new, refined estimate for the pressure gives a substantial improvement of the interpolation bounds in \cite{drivasNguyenNobiliBoundsOnHeatFluxForRayleighBenardConvectionBetweenNavierSlipFixedTemperatureBoundaries} for slippery boundaries. A rich description of the scaling behaviour arises from our result: depending on the magnitude of the Prandtl number and slip-length, our upper bounds indicate five possible scaling laws: $\Nu\sim (L_s^{-1}\Ra)^{\frac{1}{3}}$, $\Nu\sim (L_s^{-\frac 25}\Ra)^{\frac{5}{13}}$, $\Nu\sim \Ra^{\frac{5}{12}}$, $\Nu\sim \Pr^{-\frac 16}(L_s^{-\frac 43}\Ra)^{\frac{1}
{2}}$ and $\Nu\sim \Pr^{-\frac 16}(L_s^{-\frac 13}\Ra)^{\frac{1}
{2}}$.
\end{abstract}



\section{Introduction}
In this paper we consider a layer of fluid trapped between two parallel horizontal plates held at different temperatures.
{
The dimensionless equations of motions for the Boussinesq approximation are 
\begin{align}
    \frac{1}{\Pra}(\partial_t \bu + \bu \bcdot \bnabla \bu) - \rmDelta \bu +\bnabla p\label{navier-stokes}
    &=
    \Ra T \boldsymbol{e}_2
    \\
    \bnabla \bcdot \bu &= 0  \label{incomp}
    \\
    \partial_t T + \bu\bcdot \bnabla T -\rmDelta T  \label{eq:T}
    &=0\,,
\end{align}
where the Rayleigh number, $\Ra$, is defined as
\begin{align}
    \Ra=\frac{g\alpha \delta T h^3}{\kappa \nu}\label{Ra_def}
\end{align}
and the Prandtl number, $\Pr$, is 
\begin{align}
    \Pra=\frac{\nu}{\kappa}.\label{Pr_def}
\end{align}
In these definitions $g$ is the gravitational constant, $\alpha$ the thermal expansion coefficient, $\nu$ the kinematic viscosity, $\kappa$ the thermal diffusivity, $h$ the {distance between the plates} and $\delta T =T_{\rm{bottom}}-T_{\rm{top}}$ the temperature gap. 

Lengths are measured in
units of $h$, time in units of $\frac{h^2}{\kappa}$, and temperature in units of $\delta T$. In the rectangular domain $\Omega=[0,\Gamma]\times[0,1]$ the velocity $\bu=u_1(\mathbf{x},t)\boldsymbol{e}_1+u_2(\mathbf{x},t)\boldsymbol{e}_2$ and temperature $T=T(\mathbf{x},t)$ are initialized at $t=0$, where
\begin{align}
    \bu(\mathbf{x},0) &= \bu_0(\mathbf{x})
    \\
    T(\mathbf{x},0) &= T_0(\mathbf{x}).
\end{align}
The boundary conditions for the temperature are
\begin{equation}
    \label{temperature_bc}
    \begin{aligned}
    T=0 & \quad \mbox{ at } x_2=1
    \\
    T=1 & \quad \mbox{ at }  x_2=0    
    \end{aligned}
\end{equation}
while we assume Navier-slip boundary conditions for the velocity field, that is
\begin{equation}
    \label{nav-slip}
    \begin{aligned}
        u_2=0,\quad &\partial_2 u_1=-\frac{1}{L_s}u_1\quad \mbox{at } x_2=1      
        \\
        u_2=0,\quad &\partial_2 u_1=\frac{1}{L_s}u_1\phantom{-}\quad \mbox{at } x_2=0,
    \end{aligned}
\end{equation}
where $L_s$ is the constant slip-length. Here $x_2=\mathbf{x}\cdot \boldsymbol{e_2}$.
In the horizontal variable $x_1=\mathbf{x}\cdot \boldsymbol{e_1}$ all variables, including the pressure $p=p(\mathbf{x},t)$, are periodic.}

We are interested in quantifying the heat transport in the upward direction as measured by the non-dimensional Nusselt number 
\begin{equation}\label{Nusselt}
    \Nu=\left\langle \int_0^1(u_2T-\partial_2T)\,dx_2\right\rangle 
\end{equation}
where 
\begin{equation}\label{long-time-average}
    \left\langle \cdot\right\rangle=\limsup_{t\rightarrow \infty}\frac 1t\int_0^t\frac{1}{\Gamma}\int_0^{\Gamma}(\cdot)\, dx_1\, ds.
\end{equation}
This number, of utmost relevance in geophysics and industrial applications \citep{plumley2019scaling}, is predicted to obey a power law scaling of the type 
\begin{equation}
    \Nu\sim \Ra^{\alpha}\Pra^{\beta}.    
\end{equation}
Although physical arguments suggest certain scaling exponents $\alpha$ and $\beta$ (\cite{malkus1954heat, kraichnan1962turbulent, spiegel1971convection, siggia1994high}) and where transitions between scalings could occur \citep{ahlers2009heat}, these theories need to be validated.
While experiments are expensive and difficult \citep{ahlers2006experiments}, numerical studies are limited by the lack of computational power in reaching high-Rayleigh number regimes \citep{plumley2019scaling}.
Even in two spatial dimensions there have been recent debates about the presence/absence of evidence of the "ultimate scaling" (scaling that holds in the regime of very large Rayleigh numbers) for the Nusselt number \citep{zhu2018transition, doering2019absence, zhu2019zhu, doering2020absence}.
We use mathematical analysis in order to derive universal upper bounds for the Nusselt number, which serve as a rigorous indication for the scaling exponents holding in the turbulent regime $\Ra\rightarrow \infty$. The properties of boundary layers and their thickness play a central role in the scaling laws for the Nusselt number in Rayleigh-B\'enard convection (see \cite{nobili2023role} and references therein). For this reason, it is interesting to study how heat transport properties change when varying the boundary conditions. In particular, we ask the following question: are there boundary conditions inhibiting or enhancing heat transport compared to the classical \textit{no-slip} boundary conditions? While many theoretical studies focus on no-slip boundary conditions  \citep{doering1996variational, doering2001upper, DoeringOttoReznikoff2006bounds, otto2011rayleigh, goluskin2016bounds, tobasco2017optimal}, other reasonable boundary conditions have been far less explored. In the early 2000, \cite{ierley2006infinite} considered the Rayleigh-B\'enard convection problem at infinite Prandtl number and free-slip boundary conditions; {their computational result $\Nu\lesssim \Ra^{\frac{5}{12}}$ is obtained by combining the Busse's asymptotic expansion in multi-boundary-layers solutions (multi$-\alpha$ solutions) and the Constantin \& Doering \textit{background field method} approach (\cite{doering1996variational, doering1994variational}).}
Inspired by this result and a numerical study in Otero's thesis \citep{otero2002bounds}, Doering \& Whitehead rigorously proved 
\begin{equation}
    \Nu\lesssim \Ra^{\frac{5}{12}},
\end{equation}
for the two dimensional, finite Prandtl model 
(\cite{whiteheadDoeringUltimateState}) and for the three dimensional, infinite Prandtl number model (\cite{DoeringWhitehead12rigid}) using an elaborate application of the background field method. By a perturbation argument around Stokes equations, \cite{WangWhitehead13} proved 
\begin{equation}
    \Nu\lesssim \Ra^{\frac{5}{12}}+\mathrm{Gr}^2\Ra^{\frac 14}    
\end{equation}
in three-dimensions and small Grashof number ($\mathrm{Gr}=\frac{\Ra}{\Pr}$).

In many physical situations the Navier-slip boundary conditions are used to describe the presence of slip at the solid–liquid interface \citep{uthe2022optical, Neto_2005}.
We notice that, for any finite $L_s> 0$, these conditions imply vorticity production at the boundary. In the limit of infinite slip-length, the Navier-slip boundary conditions reduce to free-slip boundary conditions, while in the limit $L_s\rightarrow 0$ they converge to the no-slip boundary conditions.

Inspired by the seminal paper \cite{whiteheadDoeringUltimateState}, Drivas, Nguyen and the second author of this paper considered the two-dimensional Rayleigh-B\'enard convection model with Navier-slip boundary conditions and rigorously proved the upper bound
\begin{equation}\label{DNN22}
    \Nu\lesssim \Ra^{\frac{5}{12}}+L_s^{-2}\Ra^{\frac 12}
\end{equation}
when $\Pr\geq \Ra^{\frac 34}L_s^{-1}$ and $L_s\gtrsim 1$ \citep{drivasNguyenNobiliBoundsOnHeatFluxForRayleighBenardConvectionBetweenNavierSlipFixedTemperatureBoundaries}. The authors refer to \eqref{DNN22} as \textit{interpolation bound}: assuming $L_s\sim \Ra^{\alpha}$ with $\alpha\geq 0$ (the justification of this choice can be found in the appendix of \cite{bleitnerNobili24}), it can be easily deduced that
\begin{equation}
    \Nu\lesssim \begin{cases}\Ra^{\frac{5}{12}} & \mbox{if } \alpha\geq \frac{1}{24}\\\Ra^{\frac{1}{2}-2\alpha} & \mbox{if } 0\leq\alpha\leq \frac{1}{24}\,.\end{cases}
\end{equation}
In \cite{bleitnerNobili24}, the authors generalized the result in \cite{drivasNguyenNobiliBoundsOnHeatFluxForRayleighBenardConvectionBetweenNavierSlipFixedTemperatureBoundaries} to the case of rough walls and Navier-slip boundary conditions, proving interpolation bounds exhibiting explicit dependency on the spatially varying friction coefficient and curvature.

Given the model \eqref{navier-stokes},\eqref{incomp},\eqref{eq:T} with \eqref{temperature_bc} 
and \eqref{nav-slip}, the objective of this paper is twofold: on one hand we want to apply the so called \textit{direct method} (as outlined in \cite{otto2011rayleigh}) to derive upper bounds on the Nusselt number exhibiting a transparent relation with the thermal boundary layer's thickness. 
For discussions about different approaches to derive bounds on the Nusselt number we refer the readers to \cite{chernyshenko2022relationship} and references therein. On the other hand, we aim at improving the bound in \cite{drivasNguyenNobiliBoundsOnHeatFluxForRayleighBenardConvectionBetweenNavierSlipFixedTemperatureBoundaries} by refining the  estimates on the pressure.
Our new result is stated in the following
{
\begin{theorem}\label{Main-th}
Suppose $\bu_0\in W^{1,4}$ and $0\leq T_0\leq 1$.

If $L_s=\infty$ (i.e. $\mathbf{u}$ satisfies free-slip boundary conditions), then
\begin{equation}\label{standard}
    \Nu \lesssim \Ra^{\frac{5}{12}}\,.
\end{equation}

If $1\leq L_s<\infty$, then 
\begin{align}\label{b1Nu}
    \Nu \lesssim \Ra^\frac{5}{12}+L_s^{-\frac{1}{6}}\Pr^{-\frac{1}{6}}\Ra^\frac{1}{2}.
\end{align}

If $0<L_s<1$, then 
\begin{equation}\label{bNu}
    \Nu \lesssim L_s^{-\frac{1}{3}}\Ra^\frac{1}{3} + L_s^{-\frac{2}{3}} \Pr^{-\frac{1}{6}}\Ra^\frac{1}{2}+ L_s^{-\frac{2}{13}}\Ra^\frac{5}{13}+\Ra^\frac{5}{12}.
\end{equation}

\end{theorem}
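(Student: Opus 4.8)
The plan is to run the \emph{direct method}. First I would collect the a priori relations satisfied by the solution in the long-time average. The maximum principle for \eqref{eq:T} gives $0\le T\le1$ for all times. Averaging \eqref{eq:T} over $x_1$ and time shows that the flux $\langle u_2T\rangle-\partial_2\langle T\rangle$ is independent of the height $x_2$ and equals $\Nu$; in particular $\int_0^1\langle|\nabla T|^2\rangle\,dx_2=\Nu$ and $\int_0^1\langle u_2T\rangle\,dx_2=\Nu-1$. Testing \eqref{navier-stokes} with $\mathbf u$ and using \eqref{incomp}, \eqref{nav-slip} yields the energy balance $\int_0^1\langle|\nabla\mathbf u|^2\rangle\,dx_2+L_s^{-1}\big(\langle u_1^2\rangle|_{x_2=0}+\langle u_1^2\rangle|_{x_2=1}\big)=\Ra(\Nu-1)$, so both the bulk dissipation and the wall term are controlled by $\Ra\,\Nu$. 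Since $\mathbf u_0\in W^{1,4}$ and the flow is two-dimensional, the solution is globally regular and admits a second, higher-order balance for the vorticity $\omega=\partial_1u_2-\partial_2u_1$; this is the estimate into which the refined pressure bound will feed.

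Next I would establish the localization principle. Integrating the height-independent identity $\Nu=\langle u_2T\rangle-\partial_2\langle T\rangle$ over $x_2\in[0,\delta]$, for a boundary-layer thickness $0<\delta<\tfrac12$ to be fixed later, and using $u_2=0$ at $x_2=0$ together with $\langle T\rangle|_{x_2=0}=1$ and $\langle T\rangle|_{x_2=\delta}\ge0$, gives
\[
\Nu\le\frac1\delta\int_0^\delta\langle u_2T\rangle\,dx_2+\frac1\delta .
\]
Everything thus reduces to estimating the convective flux $\int_0^\delta\langle u_2T\rangle\,dx_2$ in a thin strip at the wall.

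The heart of the argument is the interpolation bound for this flux. Using $u_2(x_1,0)=0$ I would write $u_2$ as the vertical integral of $\partial_2u_2=-\partial_1u_1$ and pair it, via $0\le T\le1$ (or, more sharply, the horizontal fluctuation of $T$ controlled by $\int_0^1\langle|\nabla T|^2\rangle\,dx_2=\Nu$), with an interpolation in the normal variable. The strip integral then splits into a low-order part, controlled by the dissipation $\int_0^1\langle|\nabla\mathbf u|^2\rangle\,dx_2\le\Ra\,\Nu$ and responsible for the $\Ra^{1/3}$- and $\Ra^{5/13}$-type contributions, and a high-order part, controlled by the vorticity regularity and responsible for the $\Pr^{-1/6}\Ra^{1/2}$ and $\Ra^{5/12}$ contributions. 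The Navier-slip condition enters through the wall vorticity $\omega=\pm L_s^{-1}u_1$ at $x_2\in\{0,1\}$, which is nonzero for finite $L_s$: the enstrophy balance then carries a boundary flux, and controlling it forces me to use \eqref{navier-stokes} near the wall, hence the pressure. Because $\Delta p=\Ra\,\partial_2T-\Pr^{-1}\nabla\cdot(\mathbf u\cdot\nabla\mathbf u)$, the refined pressure estimate naturally carries the factors of $L_s$ and $\Pr$ that, after interpolation, replace the crude $L_s^{-2}$ of \eqref{DNN22} by the sharper exponents.

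Finally I would insert the interpolation bound into the localization inequality and optimize over $\delta$, treating the regimes $L_s\ge1$ and $0<L_s<1$ separately; balancing the competing powers of $\delta$ in each regime selects the exponents in \eqref{b1Nu} and \eqref{bNu}, each term arising from one branch of the optimization. In the free-slip limit $L_s=\infty$ the wall vorticity vanishes, only the high-order contribution survives, and one recovers \eqref{standard} and the result of \cite{whiteheadDoeringUltimateState} by a shorter route. I expect the main obstacle to be precisely this interpolation/regularity step at the slippery wall: keeping the boundary vorticity flux and the pressure under control \emph{uniformly} in $L_s$ and $\Pr$, so that the higher-order norm entering the interpolation remains bounded by the dissipation and the buoyancy forcing, is the delicate point on which the whole improvement over \eqref{DNN22} rests.
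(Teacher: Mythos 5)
Your outline reproduces the paper's overall strategy --- the direct method: localize $\Nu$ in a strip of width $\delta$ (your localization inequality is exactly \eqref{localization_2}), bound the strip flux by an interpolation inequality, feed in long-time a priori estimates, and optimize in $\delta$ --- but at the step you yourself call ``the main obstacle'' it stops being a proof. You never say how the boundary term $\tfrac{1}{L_s}\left|\left\langle p\,\partial_1 u_1|_{x_2=1}\right\rangle+\left\langle p\,\partial_1 u_1|_{x_2=0}\right\rangle\right|$ in the enstrophy balance \eqref{enstrophy} is actually controlled; you only assert that ``the refined pressure estimate naturally carries the factors of $L_s$ and $\Pr$.'' That estimate is precisely the paper's new contribution, and it does not follow from the structure of $\rmDelta p$ alone: the Neumann data for $p$ is $\pm\tfrac{1}{L_s}\partial_1 u_1$, and the straightforward treatment (as in \cite{drivasNguyenNobiliBoundsOnHeatFluxForRayleighBenardConvectionBetweenNavierSlipFixedTemperatureBoundaries}) yields a pressure bound containing $\tfrac{1}{L_s}\|\partial_1\omega\|_{L^2}$, a \emph{second-order} quantity which can only be absorbed into $\|\bnabla\omega\|_{L^2}$ when $L_s\geq 1$, reproducing \eqref{DNN22} rather than the theorem (see Remark \ref{remark_difference_pressure_bound}). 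The mechanism that breaks this circle is the trace-type estimate \eqref{claim2}, proved by testing the identity $0=\int_\Omega p(-1+2x_2)\,\partial_2(\bnabla\bcdot\bu)\,dx$ and integrating by parts, which bounds the boundary pairing by $3\|p\|_{H^1}\|\partial_2\bu\|_{L^2}$ --- a \emph{first-order} quantity controlled by the dissipation. Without this (or an equivalent device) your scheme cannot produce the exponents of \eqref{b1Nu}--\eqref{bNu} and cannot treat $0<L_s<1$ at all.

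Second, your structural description of where the five scalings come from does not match how the estimate actually closes. The interpolation bound (Lemma \ref{lemma-interpol}) is multiplicative,
\[
\frac{1}{\delta}\left\langle \int_{0}^{\delta} u_2T\,dx_2\right\rangle\leq \frac{1}{2}\left\langle \int_0^1|\partial_2 T|^2\, dx_2\right\rangle+C\delta^3\left\langle \int_0^1|\bnabla \bu|^2\, dx_2\right\rangle^{\frac 12}\left\langle \int_0^1|\bnabla^2 \bu|^2\, dx_2\right\rangle^{\frac 12},
\]
so every term in \eqref{b1Nu}--\eqref{bNu} arises from the \emph{same} product of (dissipation)${}^{1/2}$ with (Hessian)${}^{1/2}$; the different exponents are generated by the different contributions to the Hessian bound \eqref{hessian-ta} (pressure-trace term, nonlinear pressure term, buoyancy terms), not by an additive split of the flux into a low-order part ``responsible for $\Ra^{1/3}$, $\Ra^{5/13}$'' and a high-order part ``responsible for $\Pr^{-1/6}\Ra^{1/2}$, $\Ra^{5/12}$.'' To make the chain rigorous you would also need two lemmas your sketch omits: the inequality $\|\bnabla^2\bu\|_{L^2}\leq\|\bnabla\omega\|_{L^2}$, which under Navier-slip requires checking that the boundary terms produced by the double integration by parts have a favorable sign (they equal $-\tfrac{2}{L_s}\int_0^\Gamma(\partial_1u_1)^2\,dx_1\leq 0$), and a uniform-in-time $L^4$ bound on $\omega$ to control the quadratic term $\Pr^{-1}\|\omega\|_{L^2}\|\omega\|_{L^r}$ in the pressure estimate \eqref{pressure}. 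The optimization in $\delta$ at the end is routine; the theorem lives or dies with these quantitative inputs.
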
}
{We reserve the  discussion on physical implications of this result for the conclusions, in Section 4.} 
We first remark that, for $L_s\geq 1$, this result improves the upper bound in \eqref{DNN22}: in fact if $L_s\sim \Ra^{\alpha}$, with $\alpha\geq 0$, our new result yields 
\begin{equation}
    \Nu\lesssim 
    \begin{cases}
    \Ra^{\frac{5}{12}} & \mbox{if } \Pr\geq \Ra^{\frac 12-\alpha}\\\Pr^{-\frac 16}\Ra^{\frac{1}{2}-\frac{\alpha}{6}} & \mbox{if } \Pr\leq\Ra^{\frac 12-\alpha} \,.
    \end{cases}    
\end{equation}
In particular, when $\alpha=0$ we notice a crossover at $\Pr\sim \Ra^{\frac 12}$ between the $Ra^{\frac 12}$ and the $\Pr^{-\frac 16}\Ra^{\frac 12}$ scaling regime. This is reminiscent of the upper bound in \cite{choffrutNobiliOttoUpperBoundsOnNusseltNumberAtFinitePrandtlNumber} for no-slip boundary conditions. 
Our result also covers the case $0<L_s<1$ and, in this region, we can detect four scaling regimes depending on the magnitude of Prandtl number and $L_s<1$. 
The dominating terms in  \eqref{b1Nu} and \eqref{bNu} are summarized in Table \ref{table:overview_of_the_results}.
Observe that for $\Pr\rightarrow \infty$, the term $L_s^{-\frac 13}\Ra^{\frac 13}$ is dominating in the region $0<L_s<\Ra^{-\frac{2}{7}}$.
On one hand, when $\Pr\rightarrow \infty$, this seems to indicate the (expected) transition from Navier-slip to no-slip boundary conditions in the bounds. In order to contextualize this remark, we recall that for no-slip boundary conditions ($L_s=0$), the upper bound $\Nu\lesssim \Ra^{\frac 13}$ was proven when $\Pr=\infty$ (\cite{otto2011rayleigh}) and when $\Pr\gtrsim \Ra^{\frac 13}$ (\cite{choffrutNobiliOttoUpperBoundsOnNusseltNumberAtFinitePrandtlNumber}). On the other hand we stress that our bounding method breaks down in the limit $L_s\rightarrow 0$ and, consequently, all but the last scaling prefactors in \eqref{bNu} blow up.

\begin{table}
    \begin{center}
        \begin{tabular}{lccl}
            \multicolumn{2}{c}{Assumptions}  &\phantom{.}\hspace{25pt}\phantom{.}& \multicolumn{1}{c}{Bound}
            \\\hline
            & \textcolor{\colorBoth}{$\Ra^{-\frac{5}{24}}\leq L_s$} &&\textcolor{\colorBoth}{$\Nu \lesssim \Ra^{\frac{5}{12}}$}
            \\
            \multirow{2}{*}{$\Ra^\frac{11}{7}\leq \Pr$} & \textcolor{\colorSmall}{$\Ra^{-\frac{2}{7}}\leq L_s\leq \Ra^{-\frac{5}{24}}$} &&\textcolor{\colorSmall}{$\Nu \lesssim L_s^{-\frac{2}{13}}\Ra^{\frac{5}{13}}$}
            \\
            & \textcolor{\colorSmall}{$\Pr^{-\frac{1}{2}}\Ra^\frac{1}{2}\leq L_s\leq \Ra^{-\frac{2}{7}}$} && \textcolor{\colorSmall}{$\Nu \lesssim L_s^{-\frac{1}{3}}\Ra^{\frac{1}{3}}$}
            \\
            & \textcolor{\colorSmall}{$L_s\leq \Pr^{-\frac{1}{2}} \Ra ^{\frac{1}{2}}$} && \textcolor{\colorSmall}{$\Nu \lesssim L_s^{-\frac{2}{3}}\Pr^{-\frac{1}{6}}\Ra^\frac{1}{2}$}
            \\\hline
            & \textcolor{\colorBoth}{$\Ra^{-\frac{5}{24}}\leq L_s$} && \textcolor{\colorBoth}{$\Nu \lesssim \Ra^{\frac{5}{12}}$}
            \\
            $\Ra^\frac{4}{3}\leq \Pr\leq \Ra^{\frac{11}{7}}$ & \textcolor{\colorSmall}{$\Pr^{-\frac{13}{40}}\Ra^{\frac{9}{40}}\leq L_s\leq \Ra^{-\frac{5}{24}}$} &&\textcolor{\colorSmall}{$\Nu \lesssim L_s^{-\frac{2}{13}}\Ra^{\frac{5}{13}}$}
            \\
            & \textcolor{\colorSmall}{$L_s\leq \Pr^{-\frac{13}{40}} \Ra ^{\frac{9}{40}}$} && \textcolor{\colorSmall}{$\Nu \lesssim L_s^{-\frac{2}{3}}\Pr^{-\frac{1}{6}}\Ra^\frac{1}{2}$}
            \\\hline
            \multirow{2.2}{*}{$\Ra^\frac{1}{2}\leq \Pr\leq \Ra^\frac{4}{3}$} & \textcolor{\colorBoth}{$\Pr^{-\frac{1}{4}}\Ra^{\frac{1}{8}}\leq L_s$} && \textcolor{\colorBoth}{$\Nu \lesssim \Ra^{\frac{5}{12}}$}
            \\
            & \textcolor{\colorSmall}{$L_s\leq \Pr^{-\frac{1}{4}} \Ra ^{\frac{1}{8}}$} && \textcolor{\colorSmall}{$\Nu \lesssim L_s^{-\frac{2}{3}}\Pr^{-\frac{1}{6}}\Ra^\frac{1}{2}$}
            \\\hline
            & \textcolor{\colorBig}{$\Pr^{-1}\Ra^\frac{1}{2}\leq L_s$} && \textcolor{\colorBig}{$\Nu \lesssim \Ra^{\frac{5}{12}}$}
            \\
            $\Pr \leq \Ra^\frac{1}{2}$ & \textcolor{\colorBig}{$1\leq L_s \leq \Pr^{-1}\Ra^\frac{1}{2}$} && \textcolor{\colorBig}{$\Nu \lesssim L_s^{-\frac{1}{6}}\Pr^{-\frac{1}{6}}\Ra^{\frac{1}{2}}$}
            \\
            & \textcolor{\colorSmall}{$L_s\leq 1$} && \textcolor{\colorSmall}{$\Nu \lesssim L_s^{-\frac{2}{3}}\Pr^{-\frac{1}{6}}\Ra^\frac{1}{2}$}
        \end{tabular}
        \caption{Overview of the results in Theorem  \ref{Main-th}. The coloring corresponds the cases \textcolor{\colorSmall}{$L_s\leq 1$} and  \textcolor{\colorBig}{$1\leq L_s$}. In all the other (uncolored) cases, $L_s$ may be smaller or bigger than one.}
        \label{table:overview_of_the_results}
    \end{center}
\end{table}

Differently from the result in \cite{whiteheadDoeringUltimateState}, the proof of our theorem does not rely on the \textit{background field method} but rather exploits the regularity properties of the flow through a localization principle. In fact the Nusselt number can be localized in the vertical variable
\begin{equation}\label{loc_intro}
    \Nu=\frac{1}{\delta}\left\langle \int_0^{\delta} (u_2T-\partial_2 T)\, dx_2\right\rangle\leq \frac{1}{\delta}\left\langle \int_0^{\delta} u_2T\, dx_2\right\rangle+\frac{1}{\delta}.
\end{equation}
Here we used the boundary conditions for the temperature and the maximum principle $\sup_x|T(x,t)|\leq 1$ for all $t$.
Notice that this localization principle comes from the fact that the (long time and horizontal average of the) heat flux is the same for each $x_2\in (0,1)$ and this can be deduced by merely using the non-penetration boundary conditions $u_2=0$ at $x_2=0$, $x_2=1$ (see the proof of Lemma \ref{lemma_nusselt_localization} below).

The second crucial point in our proof is the following interpolation inequality 
\begin{equation}\label{inter}
    \frac{1}{\delta}\left\langle \int_{0}^{\delta} u_2T\,dx_2\right\rangle\leq \frac{1}{2}\left\langle \int_0^{1}|\partial_2 T|^2\, dx_2\right\rangle+C\delta^3\left\langle \int_0^{1}|\omega|^2\, dx_2\right\rangle^{\frac 12}\left\langle \int_0^{1}|\partial_1 \omega|^2\, dx_2\right\rangle^{\frac 12}
\end{equation}
{where $C$ is some positive constant,} which was first derived (in a slightly different form) in \cite{drivasNguyenNobiliBoundsOnHeatFluxForRayleighBenardConvectionBetweenNavierSlipFixedTemperatureBoundaries} and it is proved in Lemma \ref{lemma-interpol} below. We again observe that this bound holds only relying on the assumption $u_2=0$ at the boundaries $x_2=0, x_2=1$. 

In Lemma \ref{lemma-pressure} we prove the new pressure estimate
\begin{equation}\label{p-es}
\|p\|_{H^1}\leq C\left( \frac{1}{L_s}\|\partial_2 \bu\|_{L^2}+\frac{1}{\Pra}\|\omega\|_{L^2}\|\omega\|_{L^r}+\Ra \|T\|_{L^2}\right)\,,
\end{equation}\noeqref{p-es}
{where $L^2$ is the space of square integrable functions, while $H^1$ is the space of functions in $L^2$ with square integrable gradients. The space $L^r$ instead consists of functions whose $r$-th power is integrable. See definitions in \eqref{norm_definition}.} Notice that the improvement (compared to the pressure estimate in Proposition 2.7 in \cite{drivasNguyenNobiliBoundsOnHeatFluxForRayleighBenardConvectionBetweenNavierSlipFixedTemperatureBoundaries}) lies in the first term on the right-hand side, stemming from the new trace-type estimate 
\begin{equation}
    \bigg|\int_0^{\Gamma}(p\partial_1u_1|_{x_2=1}+p\partial_1u_1|_{x_2=0})\, dx_1\bigg|\leq 3\|p\|_{H^1}\|\partial_2 \bu\|_{L^2},
\end{equation}
used to control the boundary terms in the $H^1$ pressure identity \eqref{claim1}.
We observe that the new pressure estimate enables us to treat the case of small slip-length, i.e. $0< L_s\leq 1$. 
This small slip-length regime was not treatable in \cite{drivasNguyenNobiliBoundsOnHeatFluxForRayleighBenardConvectionBetweenNavierSlipFixedTemperatureBoundaries}{, see Remark \ref{remark_difference_pressure_bound}}.

\smallskip
\section*{Organization and notations}
The paper is divided in two sections: in Section 2 we prove all the a-priori estimates that we will need to prove the main theorem in Section 3. The crucial localization and interpolation lemmas are proven in Lemma \ref{lemma_nusselt_localization} and \ref{lemma-interpol} respectively. The improvement of the upper bounds on the Nusselt number stems from the new pressure estimates in Lemma \ref{lemma-pressure}. {In Section \ref{section_conclusion} we contextualize our result and give a physical interpretation of our bounds.}

{
Throughout the paper we will use the following Lebesgue and Sobolev norms
\begin{align}
    \|f\|_{L^p}^p = \int_{\Omega} |f|^p\ dx, \quad
    \|f\|_{W^{1,p}}^p = \int_{\Omega} |f|^p\ dx + \int_{\Omega} |\nabla f|^p\ dx, \quad
    \|f\|_{H^1} = \|f\|_{W^{1,2}}
    \label{norm_definition}
\end{align}
for any $1\leq p <\infty$.
}
 
\section{Identities and a-priori bounds}\label{sec:apriori}
In this section we derive a-priori bounds for the Rayleigh-B\'enard convection problem with Navier-slip boundary conditions. These will be used in the proof of Theorem \ref{Main-th} in the next section.

Recall that the temperature equation enjoys a maximum principle:
if $0\leq T_0(\mathbf{x})\leq 1$ then  
\begin{equation}\label{max-prin}
0\leq T(\mathbf{x},t)\leq 1 \mbox{ for all } \mathbf{x},t.
\end{equation}

{
The following lemma will allow us to localize the Nusselt number in a strip of height $\delta>0$, indicating the thermal boundary layer. This is the key ingredient of the \textit{direct method} and will later be used to bound the heat transfer.
}

\begin{lemma}[Localization of the Nusselt number]
\label{lemma_nusselt_localization}
The Nusselt number $\Nu$ defined in \eqref{Nusselt} is independent of $x_2$, that is 
\begin{equation}\label{loc}
\Nu=\langle u_2T-\partial_2 T\rangle\qquad \mbox{ for all } x_2\in [0,1].
\end{equation}
In particular, for any $\delta\in (0,1)$ it holds
\begin{equation}\label{localization_1}
    \Nu=\frac{1}{\delta}\left\langle \int_0^{\delta}(u_2T-\partial_2 T)\, dx_2\right\rangle 
\end{equation}
\end{lemma}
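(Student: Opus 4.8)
The plan is to show that the long-time, horizontal average of the vertical heat flux $u_2 T - \partial_2 T$ is constant in $x_2$, which is exactly \eqref{loc}; the formula \eqref{localization_1} then follows by averaging \eqref{loc} over $x_2\in(0,\delta)$. First I would define the averaged flux $F(x_2) := \langle u_2 T - \partial_2 T\rangle$ and aim to prove $\frac{d}{dx_2}F = 0$. The natural route is to integrate the temperature equation \eqref{eq:T} over the appropriate slab and time interval, using incompressibility and the boundary conditions to kill the remaining terms.

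\medskip\noindent
Concretely, I would start from \eqref{eq:T}, written using incompressibility \eqref{incomp} in divergence form as $\partial_t T + \nabla\cdot(\mathbf{u}\,T) - \Delta T = 0$. Integrating over the horizontal period in $x_1$ and over a strip $x_2\in(a,b)$, the horizontal derivative terms (both the advective $\partial_1(u_1 T)$ and diffusive $\partial_1^2 T$ contributions) vanish by periodicity in $x_1$. What survives is
\begin{equation}\label{flux-slab}
\partial_t \int_a^b \!\!\int_0^\Gamma T\,dx_1\,dx_2 + \Big[\int_0^\Gamma (u_2 T - \partial_2 T)\,dx_1\Big]_{x_2=a}^{x_2=b} = 0.
\end{equation}
Applying the time-and-space average $\langle\,\cdot\,\rangle$ from \eqref{long-time-average} to \eqref{flux-slab}, the first term becomes $\limsup_{t\to\infty}\frac1t\int_0^t \partial_s\big(\tfrac1\Gamma\int_a^b\!\int_0^\Gamma T\big)\,ds$; since $0\le T\le 1$ by the maximum principle \eqref{max-prin}, the quantity inside is bounded uniformly in time, so its time-average of the time-derivative vanishes as $t\to\infty$. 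This forces $F(b) = F(a)$ for every $a,b\in(0,1)$, establishing \eqref{loc}.

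\medskip\noindent
An alternative, perhaps cleaner, argument avoids choosing a slab: differentiate $F(x_2)$ directly by integrating the pointwise identity $\partial_t T + \partial_1(u_1 T) + \partial_2(u_2 T) = \partial_1^2 T + \partial_2^2 T$ only in $x_1$ over the period. After the $\langle\,\cdot\,\rangle$-average, the $\partial_t T$ term dies by boundedness of $T$, the $\partial_1$-terms die by periodicity, and one is left with $\partial_2\langle u_2 T - \partial_2 T\rangle = 0$, i.e.\ $F' \equiv 0$. I expect the main subtlety to lie in justifying that the time-average of $\partial_t T$ vanishes: this is where the maximum principle \eqref{max-prin} is essential, since it gives the uniform-in-time bound $\big|\tfrac1\Gamma\int T\big|\le 1$ that makes $\frac1t\int_0^t\partial_s(\cdots)\,ds = \frac1t[(\cdots)(t) - (\cdots)(0)] \to 0$. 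The only boundary input needed is the non-penetration condition $u_2=0$ at $x_2=0,1$ from \eqref{nav-slip}; notably the slip condition on $u_1$ plays no role here, consistent with the remark following the statement. Once $F$ is constant, evaluating at $x_2$ and comparing with the definition \eqref{Nusselt} (which is precisely the average over the full column, hence equals $F$) identifies the constant as $\Nu$, and integrating $\Nu = F(x_2)$ over $(0,\delta)$ yields \eqref{localization_1}.
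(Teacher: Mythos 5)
Your proposal is correct and follows essentially the same route as the paper: the paper's proof is exactly your ``alternative'' version, namely taking the long-time and horizontal average of \eqref{eq:T}, killing the $\partial_t T$ term by the uniform bound \eqref{max-prin} and the $\partial_1$-terms by periodicity, to conclude $\partial_2\langle u_2T-\partial_2 T\rangle=0$, after which \eqref{localization_1} follows by averaging \eqref{loc} over $(0,\delta)$. Your slab formulation is just the integrated form of the same computation, so there is nothing further to reconcile.
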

\begin{proof}
Taking the long-time and horizontal average of the equation for $T$ we have 
\begin{equation}
    0=\langle \partial_t T\rangle+\langle \bnabla \bcdot (\bu T-\bnabla T)\rangle= \partial_{2}\langle u_2T-\partial_2 T\rangle.
\end{equation}
Hence $\langle u_2T-\partial_2 T\rangle$ is constant in $x_2$, proving \eqref{loc}. The identity \eqref{localization_1} is a direct consequence of \eqref{loc}.
\end{proof}
Notice that from the boundary condition for $T$ at $x_2=0${, i.e. \eqref{temperature_bc},} and the maximum principle \eqref{max-prin} it follows
\begin{equation}
    -\int_0^\delta \partial_2 T \, dx_2= -T(x_1,\delta)+1\leq 1\,.
\end{equation}
As a consequence we obtain
\begin{align}\label{localization_2}
    \Nu \leq \frac{1}{\delta}\left\langle \int_0^{\delta}u_2T\, dx_2\right\rangle  + \frac{1}{\delta}\,.
\end{align}

Thanks to \eqref{loc}, we can now derive another useful identity {relating temperature gradients (naturally emerging in \eqref{omega_partial1T_term_estimate} and \eqref{interpolation-bound}) to the Nusselt number.}
\begin{lemma}[Representation of the Nusselt number]
The Nusselt number, defined in \eqref{Nusselt}, has the following alternative representation
\begin{equation}
    \Nu=\left\langle \int_0^{1}|\bnabla T|^2\, dx_2\right\rangle\,.\label{Nusselt_gradT}
\end{equation}
\end{lemma}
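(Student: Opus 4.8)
The plan is to derive the identity $\Nu=\left\langle \int_0^{1}|\bnabla T|^2\, dx_2\right\rangle$ by combining the localization identity \eqref{loc} with a global energy balance for the temperature field. The starting point is the observation that the Nusselt number, being constant in $x_2$ by Lemma \ref{lemma_nusselt_localization}, can be recovered as its own vertical average: integrating \eqref{loc} over $x_2\in[0,1]$ gives $\Nu=\left\langle \int_0^1 (u_2T-\partial_2T)\,dx_2\right\rangle$. This converts the representation problem into the task of identifying $\left\langle \int_0^1 (u_2T-\partial_2T)\,dx_2\right\rangle$ with the Dirichlet energy of $T$.

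First I would test the temperature equation \eqref{eq:T} against $T$ itself and take the long-time horizontal average. The time-derivative term $\langle \partial_t (T^2/2)\rangle$ should vanish under the $\limsup$ averaging (formally because $T$ is bounded by the maximum principle, so the time average of a total derivative of a bounded quantity is zero). The advection term $\langle \bu\bcdot\bnabla T\, T\rangle=\tfrac12\langle \bu\bcdot\bnabla(T^2)\rangle=-\tfrac12\langle (\bnabla\bcdot\bu)T^2\rangle$ vanishes by incompressibility \eqref{incomp} after integrating by parts, using $u_2=0$ and periodicity in $x_1$. The diffusion term, after integration by parts, produces $\langle\int_0^1|\bnabla T|^2\,dx_2\rangle$ together with a boundary contribution $\langle\int_0^\Gamma (T\partial_2 T)|_{x_2=0}^{x_2=1}\,dx_1\rangle$ coming from the vertical integration by parts. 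Here I would insert the temperature boundary conditions \eqref{temperature_bc}: since $T=0$ at $x_2=1$ the top boundary term drops, while $T=1$ at $x_2=0$ leaves the term $\langle\int_0^\Gamma \partial_2 T|_{x_2=0}\,dx_1\rangle=\langle \partial_2 T|_{x_2=0}\rangle$. This energy balance thus reads $0=-\langle\int_0^1|\bnabla T|^2\,dx_2\rangle + \langle \partial_2 T|_{x_2=0}\rangle$, i.e. $\langle\int_0^1|\bnabla T|^2\,dx_2\rangle = \langle \partial_2 T|_{x_2=0}\rangle$.

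It then remains to match the right-hand side $\langle \partial_2T|_{x_2=0}\rangle$ against the flux expression $\langle\int_0^1(u_2T-\partial_2T)\,dx_2\rangle$. Evaluating the constant $\Nu$ at the bottom plate $x_2=0$ via \eqref{loc} gives $\Nu=\langle u_2 T-\partial_2 T\rangle|_{x_2=0}=-\langle\partial_2 T|_{x_2=0}\rangle$, since $u_2=0$ there. Combining this with the energy balance yields $\Nu=-\langle\partial_2T|_{x_2=0}\rangle=\langle\int_0^1|\bnabla T|^2\,dx_2\rangle$, which is the claimed identity \eqref{Nusselt_gradT}. As a consistency check, the sign is correct: the maximum principle forces $T$ to decrease from $1$ to $0$ across the layer, so $\partial_2T|_{x_2=0}\le 0$ and $\Nu\ge 0$.

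The main obstacle I anticipate is justifying that the time-average of $\partial_t(T^2/2)$ vanishes under the $\limsup$ operator rather than merely a genuine $\lim$; this is where the boundedness $0\le T\le 1$ from the maximum principle \eqref{max-prin} is essential, since it makes $\frac1t\int_0^t \partial_s(\cdot)\,ds=\frac1t(\cdot)\big|_0^t$ decay like $1/t$. A secondary technical point is ensuring enough regularity and integrability of $T$ and $\bu$ to legitimize the integrations by parts and the boundary trace evaluations; I would appeal to the a-priori regularity of the flow already invoked in the paper (the hypotheses $\bu_0\in W^{1,4}$, $0\le T_0\le 1$ in Theorem \ref{Main-th}). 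The algebraic manipulations themselves are routine, so the crux is purely the careful treatment of the averaging and the boundary terms.
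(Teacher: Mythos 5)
Your proposal takes essentially the same route as the paper: test \eqref{eq:T} with $T$, integrate by parts using periodicity, incompressibility and the boundary conditions, kill the time derivative under the long-time average via the maximum principle, and identify $\Nu=-\langle\partial_2 T|_{x_2=0}\rangle$ from \eqref{loc}. The argument is correct in substance, but there is a sign slip in your intermediate bookkeeping: the boundary contribution from the vertical integration by parts is $\langle (T\partial_2 T)|_{x_2=1}\rangle-\langle (T\partial_2 T)|_{x_2=0}\rangle=-\langle\partial_2 T|_{x_2=0}\rangle$ (the lower-limit evaluation carries a minus sign, and $T=1$ there), so the long-time energy balance reads $\langle\int_0^1|\bnabla T|^2\,dx_2\rangle=-\langle\partial_2 T|_{x_2=0}\rangle$, not $+\langle\partial_2 T|_{x_2=0}\rangle$ as you wrote. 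As stated, your balance combined with your (correct) identity $\Nu=-\langle\partial_2 T|_{x_2=0}\rangle$ would give $\Nu=-\langle\int_0^1|\bnabla T|^2\,dx_2\rangle\leq 0$, contradicting your own final chain of equalities, which silently uses the correct sign. Once this bookkeeping is fixed, your proof coincides with the paper's.
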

\begin{proof}
Testing {\eqref{eq:T},} the temperature equation with $T$ and integrating by parts we obtain 
\begin{equation}
    \frac{1}{2}\frac{d}{dt}\|T\|_{L^2}^2=-\|\bnabla T\|_{L^2}^2-\int_0^{\Gamma}\partial_2 T|_{x_2=0}\, dx_1,   
\end{equation}
where we used the incompressibility condition \eqref{incomp}, the non-penetration condition $\bu\cdot e_2=u_2=0$ at $x_2=\{0,1\}$ and the boundary conditions \eqref{temperature_bc} 
for $T$.
The statement follows from taking the long-time averages, observing that $\limsup_{t\rightarrow \infty}\int_0^t \frac{d}{ds}\frac{1}{\Gamma} \int_0^{\Gamma}\int_0^1|T|^2\, dx_2\,dx_1\, ds=0$ thanks to the maximum principle for $T$ \eqref{max-prin}, and using $\Nu=\langle u_2T-\partial_2 T\rangle|_{x_2=0}$ by \eqref{loc}. 
\end{proof}

{
The subsequent Lemma \ref{lemma_energy} provides a bound on the long time (and spatial) average of the velocity gradient, naturally arising from the interpolation estimate in  Lemma \ref{lemma-interpol}. Moreover this bound will be used to control the vorticity gradient in Lemma \ref{lemma_enstrophy}.}

\begin{lemma}[Energy]
\label{lemma_energy}
Let $0<L_s<\infty$ and suppose $\bu_0\in L^2$. Then there exists a constant $C=C(\Gamma)>0$ such that
\begin{equation}\label{boundedness}
    \|\bu(t)\|_{L^2}\leq \|\bu_0\|_{L^2} + C\max\{1,L_s\} \Ra
\end{equation}
for all times $t\in [0,\infty)$ and 
\begin{equation}\label{energy}
    \left\langle\int_0^1 |\bnabla \bu|^2\,dx_2\right\rangle\leq \Nu \Ra.
\end{equation}
{
If $L_s=\infty$ the bound \eqref{boundedness} simplifies to
\begin{align}
    \|\bu(t)\|_{L^2}\leq \|\bu_0\|_{L^2} + C\Ra
\end{align}
and \eqref{energy} remains valid.
}
\end{lemma}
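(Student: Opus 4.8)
The plan is to derive \eqref{boundedness} and \eqref{energy} by classical energy estimates, testing the momentum equation \eqref{navier-stokes} with $\bu$ and carefully handling the boundary terms produced by the Navier-slip conditions \eqref{nav-slip}. First I would take the $L^2(\Omega)$ inner product of \eqref{navier-stokes} with $\bu$. The nonlinear term $\bu\bcdot\bnabla\bu$ tested against $\bu$ vanishes upon integration by parts using incompressibility \eqref{incomp} together with $u_2=0$ on the boundary, leaving
\begin{equation}
    \frac{1}{2\Pra}\frac{d}{dt}\|\bu\|_{L^2}^2 - \int_\Omega \rmDelta\bu\bcdot\bu\,dx = \Ra\int_\Omega T\,u_2\,dx.
\end{equation}
Integrating the viscous term by parts produces $\|\bnabla\bu\|_{L^2}^2$ plus boundary contributions; the key observation is that the Navier-slip conditions convert these boundary terms into a nonnegative friction term. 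Concretely, the boundary integral reduces (using $u_2=0$ so only $\partial_2 u_1\cdot u_1$ survives) to $\frac{1}{L_s}\int_{\partial\Omega}u_1^2\,dx_1\geq 0$, so that
\begin{equation}\label{plan_energy_identity}
    \frac{1}{2\Pra}\frac{d}{dt}\|\bu\|_{L^2}^2 + \|\bnabla\bu\|_{L^2}^2 + \frac{1}{L_s}\int_{\partial\Omega}u_1^2\,dx_1 = \Ra\int_\Omega T\,u_2\,dx.
\end{equation}

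For \eqref{boundedness} I would control the right-hand side by the maximum principle $|T|\leq 1$ and Cauchy--Schwarz, writing $\Ra\int_\Omega T u_2\,dx\leq \Ra|\Omega|^{1/2}\|u_2\|_{L^2}\leq \Ra|\Omega|^{1/2}\|\bu\|_{L^2}$. To close the estimate into a bound on $\|\bu\|_{L^2}$ (rather than its derivative) I would need a Poincaré-type inequality controlling $\|\bu\|_{L^2}$ by $\|\bnabla\bu\|_{L^2}$ and the boundary term; since $\bu$ is not mean-zero in $x_1$ in general, the non-penetration condition $u_2=0$ handles the vertical component, and for $u_1$ the friction term $\frac{1}{L_s}\int_{\partial\Omega}u_1^2$ supplies the missing boundary control. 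This is where the factor $\max\{1,L_s\}$ enters: the effective Poincaré constant degrades as $L_s$ grows, because large slip-length weakens the friction penalization, and the $L_s\to\infty$ (free-slip) limit recovers the cleaner bound quoted at the end of the lemma. Using $\frac{d}{dt}\|\bu\|_{L^2}\leq C\Pra\max\{1,L_s\}\Ra$ after absorbing the dissipation, a Grönwall/direct integration argument then yields \eqref{boundedness}.

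For the dissipation bound \eqref{energy} I would instead take the long-time and horizontal average $\langle\cdot\rangle$ of \eqref{plan_energy_identity}. The time-derivative term vanishes in the limit because $\|\bu\|_{L^2}$ is bounded by \eqref{boundedness}, so $\limsup_{t\to\infty}\frac{1}{t}(\|\bu(t)\|_{L^2}^2-\|\bu_0\|_{L^2}^2)=0$. Discarding the nonnegative friction term and bounding the buoyancy work by the Nusselt number gives the result: indeed $\Ra\langle\int_0^1 u_2 T\,dx_2\rangle$ is exactly $\Ra$ times the convective part of the heat flux, and combining $\Nu=\langle u_2 T-\partial_2 T\rangle$ from \eqref{loc} with $\langle\partial_2 T\rangle=0$ (the vertical average of a full derivative against periodicity, or directly from the definition) identifies $\langle\int_0^1 u_2 T\,dx_2\rangle\leq \Nu$, yielding $\langle\int_0^1|\bnabla\bu|^2\,dx_2\rangle\leq\Nu\,\Ra$.

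The main obstacle I anticipate is the careful bookkeeping of the boundary terms under the Navier-slip conditions and, relatedly, establishing the correct $L_s$-dependent Poincaré inequality needed to close \eqref{boundedness}. The sign of the friction term is favorable, which is essential, but extracting a clean constant $C(\Gamma)$ with the sharp $\max\{1,L_s\}$ scaling requires treating the regimes $L_s\leq 1$ and $L_s\geq 1$ separately and tracking how the boundary contribution competes against the interior Poincaré constant. The dissipation identity \eqref{energy}, by contrast, is comparatively routine once \eqref{boundedness} guarantees the vanishing of the time-average of the kinetic energy's growth.
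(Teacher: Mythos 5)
Your plan for finite $L_s$ follows the paper's route almost exactly (energy identity with the friction term, then a Poincar\'e-type inequality using $u_2=0$ for the vertical component and the boundary friction for $u_1$), but your finishing step for \eqref{boundedness} fails as written. The inequality you propose to integrate, $\frac{d}{dt}\|\bu\|_{L^2}\leq C\Pra\max\{1,L_s\}\Ra$, only gives $\|\bu(t)\|_{L^2}\leq \|\bu_0\|_{L^2}+C\Pra\max\{1,L_s\}\Ra\, t$, i.e.\ linear growth in time, not the time-uniform bound of the lemma; it also carries a spurious $\Pra$ factor that \eqref{boundedness} does not have. The damping term produced by the Poincar\'e-type inequality must be \emph{kept} on the left, not absorbed away: combining the energy identity with Young's inequality one arrives at
\begin{equation}
    \frac{1}{\Pra}\frac{d}{dt}\|\bu\|_{L^2}^2 + \frac{1}{C}\min\{1,L_s^{-1}\}\,\|\bu\|_{L^2}^2 \leq C\max\{1,L_s\}\,\Gamma\,\Ra^2,
\end{equation}
which is the paper's estimate \eqref{energy_est_b}, and it is Gr\"onwall applied to this dissipative ODE (the absorbing-ball structure $X(t)\leq X(0)e^{-a\Pra t}+b/a$) that yields a bound uniform in $t$ and independent of $\Pra$, with $b/a$ producing exactly the $\max\{1,L_s\}^2\Ra^2$ scaling.

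Second, the free-slip case is a genuine gap: the lemma also asserts the bound for $L_s=\infty$, and your claim that this limit "recovers the cleaner bound" is false — your constant $\max\{1,L_s\}$ blows up as $L_s\to\infty$, because the friction term, which is your only control on the horizontal mean of $u_1$, disappears. The paper needs a separate mechanism here: integrating the first component of \eqref{navier-stokes} over $\Omega$ shows (using \eqref{incomp}, \eqref{nav-slip} and horizontal periodicity) that $\int_\Omega u_1\,dx$ is conserved, so by Galilean invariance one may take $u_1$ mean-free and use the standard Poincar\'e inequality in place of the friction-based one; without this, or some substitute, your argument does not cover $L_s=\infty$. Finally, a minor slip in your proof of \eqref{energy}: the claim $\langle \partial_2 T\rangle=0$ "by periodicity" is wrong, since $T$ is not periodic in $x_2$ (by \eqref{temperature_bc} it equals $1$ at the bottom and $0$ at the top); the correct computation gives $\left\langle\int_0^1\partial_2 T\,dx_2\right\rangle=-1$, hence $\left\langle\int_0^1 u_2 T\,dx_2\right\rangle=\Nu-1\leq\Nu$, so your conclusion survives but for the wrong reason.
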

\begin{proof}
{At first we assume $0<L_s<\infty$ and} test the velocity equation \eqref{navier-stokes} with $\bu$, integrate by parts, use the boundary conditions \eqref{nav-slip} for $u$ and the incompressibility condition $\eqref{incomp}$ to find
\begin{equation}
    \frac {1}{2\Pra}\frac{d}{dt}\int_{\Omega} |\bu|^2\, dx+ \int_{\Omega} |\bnabla \bu|^2\, dx+\frac{1}{L_s}\int_{0}^{\Gamma} (u_1^2|_{x_2=0}+u_1^2|_{x_2=1})\, dx_1=\Ra \int_{\Omega} Tu_2\, dx.\label{testing_nse}
\end{equation}
{
The fundamental theorem of calculus implies
\begin{align}
    u_1(\mathbf{x}) = u_1(x_1,0) + \int_0^{x_2} \partial_2 u_1(x_1,z)\, dz
\end{align}
and using Young's and Hölder's inequality
\begin{align}
    u_1^2(\mathbf{x}) &= u_1^2(x_1,0) + 2 u_1(x_1,0)\int_0^{x_2} \partial_2 u_1(x_1,z)\, dz + \left(\int_0^{x_2} \partial_2 u_1(x_1,z)\, dz\right)^2
    \\
    &\leq 2 u_1^2(x_1,0) + 2\left(\int_0^{x_2} |\partial_2 u_1(x_1,z)|\, dz\right)^2
    \\
    &\leq 2 u_1^2(x_1,0) + 2\int_0^{x_2} |\partial_2 u_1(x_1,z)|^2\, dz,
\end{align}
which after integration implies
\begin{align}
    \|u_1\|_{L^2}^2 \leq 2 \int_0^\Gamma u_1^2\vert_{x_2=0}\, dx_1 + 2 \|\partial_2 u_1\|_{L^2}^2.\label{poincare_type_estimate_u1}
\end{align}
By \eqref{nav-slip} one has $u_2=0$ on the boundaries and therefore the analogous estimate shows
\begin{align}
    \|u_2\|_{L^2}^2 \leq \|\partial_2 u_2\|_{L^2}^2.
    \label{poincare_type_estimate_u2}
\end{align}
The full vector norm of the velocity can now be split into the norms of its components and by \eqref{poincare_type_estimate_u1} and \eqref{poincare_type_estimate_u2} it holds
\begin{align}
    \|\bu\|_{L^2}^2 &= \|u_1\|_{L^2}^2 + \|u_2\|_{L^2}^2
    \\
    &\leq 2 \left(\|\partial_2 u_1\|_{L^2}^2 + \int_{0}^{\Gamma} (u_1^2|_{x_2=0}+u_1^2|_{x_2=1})\, dx_1 + \|\partial_2 u_2\|_{L^2}^2\right)
    \\
    &\leq 2 \left(\|\bnabla \bu\|_{L^2}^2 + \int_{0}^{\Gamma} (u_1^2|_{x_2=0}+u_1^2|_{x_2=1})\, dx_1\right),
    \label{energy_nav_slip_poincare_type}
\end{align}
}
which applied to \eqref{testing_nse} yields
\begin{align}
    \frac {1}{2\Pra}&\frac{d}{dt}\int_{\Omega} |\bu|^2\, dx+ \frac{1}{C}\min\{1,L_s^{-1}\}\|\bu\|_{L^2}^2\nonumber
    \\
    &\leq\Ra \int_{\Omega} Tu_2\, dx
    \,{\leq\Ra  \|T\|_{L^2}\|u_2\|_{L^2}}
    \,{\leq \frac{1}{4\epsilon}\|T\|_{L^2}^2\Ra^2 + \epsilon \|u_2\|_{L^2}^2}
    \leq \frac{1}{4\epsilon}\Gamma\Ra^2 + \epsilon \|u_2\|_{L^2}^2
    \label{energy_est_a}
\end{align}
{for some constant $C>0$}, where we used Hölder's inequality, Young's inequality and $\|T\|_{L^{\infty}}\leq 1$ because of the maximum principle \eqref{max-prin}. Setting $\epsilon = \frac{1}{2C}\min\{1,L_s^{-1}\}$ implies
\begin{align}
    \frac {1}{\Pra}\frac{d}{dt}\|\bu\|_{L^2}^2+ \frac{1}{C}\min\{1,L_s^{-1}\}\|\bu\|_{L^2}^2\leq C\max\{1,L_s\}\Gamma\Ra^2
    \label{energy_est_b}
\end{align}
and Grönwall's inequality now yields \eqref{boundedness}. Taking the long-time average of \eqref{testing_nse}, using \eqref{boundedness}, one has
\begin{equation}
    \left\langle\int_0^1 |\bnabla \bu|^2\, dx_2\right\rangle+\frac{1}{L_s}\left\langle (u_1^2|_{x_2=0} + u_1^2|_{x_2=1})\right\rangle=\Ra \left\langle \int_0^1 Tu_2\, dx_2\right\rangle.
    \label{energy_est_c}
\end{equation}
The claim follows by observing that, due to the boundary conditions for $T$, we have
\begin{equation}
    \Nu=\left\langle \int_0^1 Tu_2\, dx_2\right\rangle+1.
    \label{energy_est_d}
\end{equation}
{
If $L_s=\infty$, integrating the first component of \eqref{navier-stokes} in space yields
\begin{align}
    \frac{1}{\Pra}\frac{d}{dt} \int_{\Omega} u_1 \, dx = - \frac{1}{\Pra}\int_{\Omega} \bu\cdot \nabla u_1 \, dx + \int_{\Omega} \Delta u_1 \, dx - \int_{\Omega} \partial_1 p.
    \label{energy_free_average}
\end{align}
The first term on the right-hand side of \eqref{energy_free_average} vanishes after integration by parts due to the incompressibility condition \eqref{incomp} and the boundary conditions \eqref{nav-slip}. Similarly the second and third term on the right-hand side vanish due to Stokes' theorem, \eqref{nav-slip} and the periodicity in the horizontal direction, showing that the spatial average of $u_1$ is conserved. Therefore, due to the Galilean symmetry of the system we can assume $u_1$ to be average free. Consequently the Poincaré inequality (\cite{Evans22}, Section 5.8.1) implies that there exists a constant $C=C(\Gamma)>0$ such that
\begin{align}
    \|u_1\|_{L^2}\leq C \|\bnabla u_1\|_{L^2}
\end{align}
and combined with \eqref{poincare_type_estimate_u2} we find
\begin{align}
    \|\bu\|_{L^2} \leq C\|\bnabla \bu\|_{L^2},
    \label{enery_free_slip_poincare}
\end{align}
the analogous of \eqref{energy_nav_slip_poincare_type}. Using \eqref{enery_free_slip_poincare} instead of \eqref{energy_nav_slip_poincare_type} the arguments corresponding to \eqref{energy_est_a}-\eqref{energy_est_d} yield the bounds for $L_s=\infty$.
}
\end{proof}

In order to bound the second derivatives of $\bu$ we will exploit the equation for the vorticity $\omega=\partial_1 u_2-\partial_2 u_1$:
\begin{alignat}{2}
    \Pr^{-1}(\partial_t \omega + \bu\bcdot \bnabla \omega) - \rmDelta \omega &= \Ra \partial_1T &\quad &\mbox{ in } \Omega\qquad\
    \\
    \omega &= \tfrac{1}{L_s}u_1 &\quad &\mbox{ at } x_2=1
    \\
    \omega &= -\tfrac{1}{L_s}u_1 &\quad &\mbox{ at } x_2=0\,.
\end{alignat}
{Note that in the two-dimensional setting the vorticity is a scalar function and} {for any $0<L_s\leq \infty$}, we have
\begin{equation}\label{grad-iden}
 \|\bnabla \bu\|_{L^2}=\|\omega\|_{L^2}, \qquad \|\bnabla \bu\|_{L^p}\leq C\|\omega\|_{L^p}.
\end{equation}
While the identity in $L^2$ follows from a direct computation, the inequality in $L^p$ follows by elliptic regularity: in fact let $\psi$ be the the stream function for $\bu$, i.e. $\bu=\bnabla^{\perp}\psi=(-\partial_2 \psi, \partial_1 \psi)$. {Since $\partial_1 \psi = u_2 = 0$ at $x_2=1$ and $x_2 = 0$ and $\psi$ is only defined up to a constant we can choose it such that $\psi = 0$ on $x_2=0$. Therefore, using the fundamental theorem of calculus
\begin{align}
    \psi(x_1,1)= \psi(x_1,0) + \int_0^1 \partial_2 \psi(x_1,z) \ dz = -\int_0^1 u_1(x_1,z) \ dz 
    \label{psi_est_a}
\end{align}
and since $\psi$ is constant at $x_2=1$ averaging \eqref{psi_est_a} in $x_1$ yields $\psi(x_1,1)=-\frac{1}{\Gamma}\int_{\Omega} u_1\, dx$. Combining these observations with the direct computation $\rmDelta \psi = \bnabla^\perp \cdot \bnabla^\perp \psi = \bnabla^\perp \cdot \bu = \omega$ shows that $\psi$ is a solution of
\begin{alignat}{2}
        \rmDelta \psi&=\omega \qquad\ &&\mbox{ in } \Omega
        \\
        \psi &= -\frac{1}{\Gamma}\int_\Omega u_1 \, dx\quad &&\mbox{ at } x_2=1
        \\
        \psi &= 0\quad &&\mbox{ at } x_2=0
\end{alignat}
and $\tilde \psi = \psi + x_2 \int_\Omega u_1\, d\mathbf{x}$ solves}
\begin{alignat}{2}
        \rmDelta \tilde\psi&=\omega \qquad && \mbox{ in } \Omega
        \\
        \psi &= 0\quad &&\mbox{ at } x_2 \in \{0,1\}.
\end{alignat}
One has
\begin{align}
    \|\bnabla \bu\|_{L^p}^p
    &=\|\bnabla u_1\|_{L^p}^p+\|\bnabla u_2\|_{L^p}^p
    =\|-\bnabla \partial_2 \psi\|_{L^p}^p+\|\bnabla \partial_1 \psi\|_{L^p}^p
    \\
    &\leq \|\bnabla^2  \psi\|_{L^p}^p+\|\bnabla^2 \psi\|_{L^p}^p
    = \|\bnabla^2  \tilde \psi\|_{L^p}^p+\|\bnabla^2 \tilde\psi\|_{L^p}^p
    \leq C\|\omega\|_{L^p}^p,
\end{align}
where we used {the Calderon-Zygmund estimate (\cite{GT77}, Section 9.4)} in the last inequality.

{The following lemma provides a higher order version of Lemma \ref{lemma_energy}. The uniform in time bound will later be used to control the pressure terms arising due to the vorticity production on the boundary, while the long time average estimate will be used in the proof of the main theorem to estimate the thickness of the thermal boundary layer.}
{
\setnoclub[3]   
\begin{lemma}[Enstrophy]
\label{lemma_enstrophy}
Suppose $\bu_0\in W^{1,4}$ and {$0<L_s \leq \infty$}. Then there  exists a constant $C=C(\Gamma)>0$ such that for all $t>0$
\begin{equation}
\label{boundedness-omega}
   \|\omega(t)\|_{L^4}\leq C\max \left\{1, L_s^{-3}\right\}(\|\bu_0\|_{W^{1,4}}+\Ra)
\end{equation}
and 
\begin{equation}\label{enstrophy}
    \left\langle\int_0^1 |\bnabla \omega|^2\,dx_2\right\rangle\leq\frac{1}{L_s}\left|\left\langle p\partial_1 u_1|_{x_2=1}\right\rangle+\left\langle p\partial_1 u_1|_{x_2=0}\right\rangle\right|+\Nu\Ra^{\frac 32}.
\end{equation}
\end{lemma}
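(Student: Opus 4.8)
The plan is to derive both estimates from energy-type identities for the vorticity equation, the only real subtlety being the boundary terms generated by the Navier-slip condition $\omega=\pm\tfrac{1}{L_s}u_1$.

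For the enstrophy bound \eqref{enstrophy} I would test the vorticity equation with $\omega$ and integrate by parts over $\Omega$. The transport term $\tfrac{1}{\Pr}\int(\bu\bcdot\bnabla\omega)\omega$ vanishes: writing it as $\tfrac{1}{2\Pr}\int\bu\bcdot\bnabla(\omega^2)$ and integrating by parts, the bulk part dies by incompressibility and the boundary part by $u_2=0$. The viscous term yields $\int_\Omega|\bnabla\omega|^2$ together with the boundary contribution $-\tfrac{1}{L_s}\int_0^\Gamma(\partial_2\omega\,u_1|_{x_2=1}+\partial_2\omega\,u_1|_{x_2=0})\,dx_1$, where I have already inserted the boundary condition. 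The key step is then to rewrite the wall vorticity flux $\partial_2\omega$ using the tangential momentum balance: since in two dimensions $\rmDelta u_1=-\partial_2\omega$ and the horizontal component of \eqref{navier-stokes} carries no buoyancy forcing, evaluating it on the boundaries (where $u_2=0$, so $\bu\bcdot\bnabla u_1=u_1\partial_1 u_1$) gives $\partial_2\omega=-\partial_1 p-\tfrac{1}{\Pr}(\partial_t u_1+u_1\partial_1 u_1)$.

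Substituting this and integrating by parts in the periodic variable $x_1$, the pressure contribution becomes $-\tfrac{1}{L_s}\int p\,\partial_1 u_1$, the advective term $\int u_1^2\partial_1 u_1$ integrates to zero, and the remainder is the total time derivative $\tfrac{1}{2L_s\Pr}\tfrac{d}{dt}\int(u_1^2|_{x_2=1}+u_1^2|_{x_2=0})$. Taking the long-time average $\langle\cdot\rangle$ annihilates this term as well as $\tfrac{1}{2\Pr}\tfrac{d}{dt}\|\omega\|_{L^2}^2$, which is legitimate precisely because the uniform bound \eqref{boundedness-omega} keeps $\|\omega(t)\|_{L^2}$ and the boundary traces of $u_1$ bounded in time. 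What survives is $\langle\int_0^1|\bnabla\omega|^2\rangle=\tfrac{1}{L_s}(\langle p\partial_1 u_1|_{x_2=1}\rangle+\langle p\partial_1 u_1|_{x_2=0}\rangle)+\Ra\langle\int_0^1\partial_1 T\,\omega\rangle$. I would bound the buoyancy term by Cauchy--Schwarz, using $\langle\int_0^1|\partial_1 T|^2\rangle\le\langle\int_0^1|\bnabla T|^2\rangle=\Nu$ from \eqref{Nusselt_gradT} and $\langle\int_0^1|\omega|^2\rangle=\langle\int_0^1|\bnabla\bu|^2\rangle\le\Nu\Ra$ from \eqref{grad-iden} and \eqref{energy}, giving $\Ra\langle\int_0^1\partial_1 T\,\omega\rangle\le\Nu\Ra^{\frac32}$, which is \eqref{enstrophy} after taking absolute values in the pressure term. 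In the free-slip case $L_s=\infty$ the boundary condition reduces to $\omega=0$, all boundary terms drop, and the argument simplifies.

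For the uniform bound \eqref{boundedness-omega} I would run an $L^4$ energy estimate, testing the vorticity equation with $|\omega|^2\omega$. The transport term again vanishes, the viscous term produces a nonnegative bulk contribution $3\int|\omega|^2|\bnabla\omega|^2$ plus boundary terms that I would control by trace inequalities after inserting $\omega=\pm\tfrac{1}{L_s}u_1$; this is where the prefactor $\max\{1,L_s^{-3}\}$ enters. The forcing is handled by integrating $\Ra\int\partial_1 T\,|\omega|^2\omega=-3\Ra\int T|\omega|^2\partial_1\omega$ by parts and using $\|T\|_{L^\infty}\le1$ from \eqref{max-prin}, so that a small multiple of the good term $\int|\omega|^2|\bnabla\omega|^2$ is absorbed and the rest is dominated by lower-order quantities; Grönwall's inequality, started from $\|\omega_0\|_{L^4}\lesssim\|\bu_0\|_{W^{1,4}}$, then closes the estimate. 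I expect the boundary terms to be the main obstacle throughout: in the enstrophy estimate one must trade the wall vorticity flux for the pressure through the tangential momentum balance, and in the uniform estimate one must absorb the slip-generated boundary vorticity via careful trace estimates, which is the source of the singular $L_s$ prefactors as $L_s\to0$.
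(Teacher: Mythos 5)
Your argument for \eqref{enstrophy} is correct and is essentially the paper's own proof: test the vorticity equation with $\omega$, kill the transport term using incompressibility and $u_2=0$, rewrite the wall vorticity flux via $\partial_2\omega=-\rmDelta u_1$ and the horizontal momentum balance at the wall, so that the boundary contribution splits into the pressure term, the exact time derivative $\tfrac{1}{2L_s\Pra}\tfrac{d}{dt}\int_0^\Gamma u_1^2\,dx_1$, and the advective piece $\tfrac{1}{L_s\Pra}\int_0^\Gamma u_1^2\partial_1 u_1\,dx_1=0$ by periodicity; then discard the time derivatives under the long-time average thanks to the uniform-in-time bounds, and estimate the buoyancy term by Cauchy--Schwarz together with \eqref{grad-iden}, \eqref{energy} and \eqref{Nusselt_gradT}. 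One small imprecision: boundedness in time of the boundary traces of $u_1$ does not follow from \eqref{boundedness-omega} alone; the trace is controlled by the full $H^1$ norm of $\bu$, so you also need \eqref{boundedness} from the energy lemma, exactly as the paper does.

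The gap is in your sketch for \eqref{boundedness-omega}, a bound the paper does not reprove: it cites Lemma 2.11 of \cite{drivasNguyenNobiliBoundsOnHeatFluxForRayleighBenardConvectionBetweenNavierSlipFixedTemperatureBoundaries} and notes that the same argument covers $L_s<1$. In your $L^4$ estimate the problematic object is the viscous boundary term
\begin{equation}
    -\int_0^\Gamma \partial_2\omega\,\omega^3\big|_{x_2=1}\,dx_1 + \int_0^\Gamma \partial_2\omega\,\omega^3\big|_{x_2=0}\,dx_1,
\end{equation}
which involves the \emph{normal derivative} of the vorticity at the wall. The Navier-slip condition gives $\omega=\pm\tfrac{1}{L_s}u_1$ there but says nothing about $\partial_2\omega$, and no trace inequality controls $\partial_2\omega$ at the available regularity level (that would require $H^2$ control of $\omega$, i.e.\ precisely the quantity you are trying to bound, and even that only in long-time average). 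So ``control by trace inequalities after inserting $\omega=\pm\tfrac{1}{L_s}u_1$'' is not a valid step. The only available move is the same substitution you used for \eqref{enstrophy}: $\partial_2\omega=-\rmDelta u_1=-\tfrac{1}{\Pra}(\partial_t u_1+u_1\partial_1 u_1)-\partial_1 p$ at the wall. The $\partial_t u_1$ part then yields the harmless term $\tfrac{1}{4L_s^3\Pra}\tfrac{d}{dt}\int_0^\Gamma u_1^4\,dx_1$ and the advective part vanishes by periodicity, but the pressure part leaves
\begin{equation}
    \frac{1}{L_s^3}\int_0^\Gamma \bigl(\partial_1 p\,u_1^3\big|_{x_2=1}+\partial_1 p\,u_1^3\big|_{x_2=0}\bigr)\,dx_1,
\end{equation}
which neither vanishes nor can be absorbed into the good term $3\int_\Omega\omega^2|\bnabla\omega|^2\,dx$ by elementary means. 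Estimating it requires the $H^1$ pressure bound of Lemma \ref{lemma-pressure}, whose right-hand side itself contains $\|\omega\|_{L^2}\|\omega\|_{L^r}$, so the Gr\"onwall loop you propose does not close as stated; closing it is exactly the nontrivial content of the cited lemma. For the purposes of the present lemma, the efficient route is the paper's: quote the reference for \eqref{boundedness-omega} and give your (correct) argument for \eqref{enstrophy}.
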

}
\begin{proof}
For the proof of \eqref{boundedness-omega} we refer the reader to 
\citep[Lemma 2.11]{drivasNguyenNobiliBoundsOnHeatFluxForRayleighBenardConvectionBetweenNavierSlipFixedTemperatureBoundaries}, where the case of $L_s\geq 1$ is covered. The same argument also yields the bound in case of $L_s<1$.
Testing the equation with $\omega$ and integrating by parts we obtain
\begin{multline}
    \frac{1}{2\Pra}\frac{d}{dt}\left(\|\omega\|_{L^2}^2+\frac{1}{L_s}\|u_1\|_{L^2(x_2=0)}^2+\frac{1}{L_s}\|u_1\|_{L^2(x_2=1)}^2\right) + \|\bnabla \omega\|_{L^2}^2\, dx\\
    +\frac{1}{L_s}\left[\int_0^{\Gamma}\partial_1u_1 p|_{x_2=0}\, dx_1+\int_0^{\Gamma}\partial_1u_1 p|_{x_2=1}dx_1\right]\, =\Ra\int_{\Omega}\omega \partial_1 T\, dx\,,
    \label{test-vorticity-equation}
\end{multline}
where we used $-\partial_2\omega=\rmDelta u_1=\frac{1}{\Pra}(\partial_tu_1+(\bu\bcdot \bnabla )u_1)-\partial_1 p$ and the fact that $\omega(\bu\bcdot \bnabla )u_1 = \pm \frac{1}{L_s} u_1^2 \partial_1 u_1$ at $x_2=\{0,1\}$  vanishes after integration (in $x_1$) due to periodicity. We integrate \eqref{test-vorticity-equation} in time and notice that, thanks to \eqref{grad-iden} and the trace estimate {{(\cite{Evans22}, Section 5.5)}}, the first bracket on the left-hand side is bounded by the $H^1$-norm of $\mathbf{u}$:
\begin{align}
    \|\omega\|_{L^2}^2 + \frac{1}{L_s} \|u_1\|_{L^2(x_2=0)}^2+\frac{1}{L_s}\|u_1\|_{L^2(x_2=1)}^2 \leq C(L_s) \|\mathbf{u}\|_{H^1}^2.
\end{align}
Note also that due to Hölder's inequality, \eqref{boundedness-omega} additionally yields
\begin{align}
    \|\omega(t)\|_{L^2}\leq C\max \left\{1, L_s^{-3}\right\}(\|\bu_0\|_{W^{1,4}}+\Ra),
\end{align}
which, combined with \eqref{boundedness} and \eqref{grad-iden}, implies that $\|\mathbf{u}(t)\|_{H^1}$ is universally bounded in time. 
Therefore claim \eqref{enstrophy} follows from taking the space and long-time average of \eqref{test-vorticity-equation}, using the fact that the long-time average of the first term in \eqref{test-vorticity-equation} vanishes due to the argument above, and observing
\begin{equation}
    \label{omega_partial1T_term_estimate}
    \left\langle\int_{\Omega}\omega \partial_1 T\, dx_2\right\rangle \leq \left\langle\int_{\Omega}|\omega|^2  dx_2\right\rangle^{\frac 12} \left\langle\int_{\Omega}|\bnabla T|^2  dx_2\right\rangle^{\frac 12} \leq (\Nu\Ra)^{\frac 12}\Nu^{\frac 12},
\end{equation}
where we used \eqref{grad-iden}, \eqref{energy} and \eqref{Nusselt_gradT}.
\end{proof}
Notice that the pressure term appears at the boundary in \eqref{enstrophy} and, for this reason, we need to control its $H^1-$norm. {The following lemma will provide control for this term.}
Taking the divergence of \eqref{navier-stokes}, it is easy to see that the pressure $p$ satisfies
\begin{alignat}{2}
        \rmDelta p &= -\tfrac{1}{\Pra}\bnabla \bu^{\textup{T}}\bcolon\bnabla \bu+\Ra \partial_2 T  &\quad&\mbox{ in } \Omega
        \\
        \partial_2 p &= \tfrac{1}{L_s}\partial_1u_1\phantom{- \Ra}&\quad&\mbox{ at } x_2=1
        \label{pressure_pde}
        \\
        -\partial_2 p &= \tfrac{1}{L_s}\partial_1u_1- \Ra &\quad&\mbox{ at } x_2=0,
\end{alignat}
where the boundary conditions are derived by tracing the second component of \eqref{navier-stokes} on the boundary.

{
\setnoclub[3]
\begin{lemma}[Pressure bound]\label{lemma-pressure}
Let $r>2$. Then there exists a constant $C=C(r,\Gamma)>0$ such that
\begin{equation}\label{pressure}
\|p\|_{H^1}\leq C\left( \frac{1}{L_s}\|\partial_2 \bu\|_{L^2}+\frac{1}{\Pra}\|\omega\|_{L^2}\|\omega\|_{L^r}+\Ra \|T\|_{L^2}\right)\,.
\end{equation}
\end{lemma}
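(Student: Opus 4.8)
The plan is to treat \eqref{pressure_pde} as a Neumann problem for $p$, normalized to have zero mean $\int_\Omega p\,dx=0$ (the pressure being defined only up to a constant), and to test it against $p$ itself. After one integration by parts, $\int_\Omega |\bnabla p|^2\,dx$ appears on the left, while the right-hand side splits into a bulk forcing contribution and a boundary contribution carrying the Neumann data. Writing the inhomogeneity as $-\tfrac1\Pr\bnabla\bu^{\textup{T}}\bcolon\bnabla\bu+\Ra\partial_2 T$, this reads
\[ \int_\Omega|\bnabla p|^2\,dx = \tfrac1\Pr\int_\Omega p\,\bnabla\bu^{\textup{T}}\bcolon\bnabla\bu\,dx - \Ra\int_\Omega p\,\partial_2 T\,dx + \tfrac1{L_s}\int_0^\Gamma\big(p\partial_1u_1|_{x_2=1}+p\partial_1u_1|_{x_2=0}\big)\,dx_1 - \Ra\int_0^\Gamma p|_{x_2=0}\,dx_1, \]
the last term stemming from the inhomogeneity $-\Ra$ in the bottom boundary condition. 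The first key observation is that integrating the buoyancy term $-\Ra\int_\Omega p\,\partial_2 T\,dx$ by parts in $x_2$ and using $T|_{x_2=1}=0$, $T|_{x_2=0}=1$ produces exactly $+\Ra\int_0^\Gamma p|_{x_2=0}\,dx_1$, which cancels the spurious boundary term above. This yields the clean $H^1$ pressure identity
\[ \int_\Omega|\bnabla p|^2\,dx = \tfrac1\Pr\int_\Omega p\,\bnabla\bu^{\textup{T}}\bcolon\bnabla\bu\,dx + \Ra\int_\Omega T\,\partial_2 p\,dx + \tfrac1{L_s}\int_0^\Gamma\big(p\partial_1u_1|_{x_2=1}+p\partial_1u_1|_{x_2=0}\big)\,dx_1. \]

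The buoyancy and nonlinear bulk terms are then routine. For the buoyancy term, Cauchy--Schwarz gives $\Ra\int_\Omega T\partial_2 p\le \Ra\|T\|_{L^2}\|\bnabla p\|_{L^2}$. For the nonlinear term I would use the pointwise bound $|\bnabla\bu^{\textup{T}}\bcolon\bnabla\bu|\le|\bnabla\bu|^2$ and split by a three-factor Hölder inequality with exponents $(q,2,r)$, where $\tfrac1q=\tfrac12-\tfrac1r$; this is admissible precisely because $r>2$. Since $\Omega$ is two-dimensional, the Sobolev embedding $H^1\hookrightarrow L^q$ controls $\|p\|_{L^q}\le C\|p\|_{H^1}$, and \eqref{grad-iden} converts the velocity gradients into vorticity, giving $\tfrac1\Pr\int_\Omega p\,\bnabla\bu^{\textup{T}}\bcolon\bnabla\bu\le \tfrac{C}\Pr\|p\|_{H^1}\|\omega\|_{L^2}\|\omega\|_{L^r}$.

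The main obstacle is the boundary term, which is the source of the improved first term in \eqref{pressure} and requires the new trace-type estimate. Here I would not estimate $\partial_1 u_1$ directly on the boundary; instead, using incompressibility \eqref{incomp} I rewrite $\partial_1 u_1=-\partial_2 u_2$ and convert each boundary integral into a bulk integral via the fundamental theorem of calculus with the linear weights $x_2$ and $1-x_2$ (which vanish at the opposite wall). The resulting volume integrals contain $\partial_2(p\,\partial_1 u_1)$; the troublesome second-derivative term $p\,\partial_1\partial_2 u_1$ is removed by integrating by parts in $x_1$, where periodicity kills the boundary contribution, while the remaining $\int_\Omega p\partial_1 u_1$-type term is handled by integrating by parts in $x_2$ and invoking the non-penetration condition $u_2=0$ on $\partial\Omega$. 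Cauchy--Schwarz together with the Poincaré-type bound \eqref{poincare_type_estimate_u2}, $\|u_2\|_{L^2}\le\|\partial_2 u_2\|_{L^2}$, then collapses everything to $\|p\|_{H^1}\|\partial_2\bu\|_{L^2}$, the various pieces summing to the constant $3$, i.e.
\[ \Big|\int_0^\Gamma\big(p\partial_1u_1|_{x_2=1}+p\partial_1u_1|_{x_2=0}\big)\,dx_1\Big|\le 3\|p\|_{H^1}\|\partial_2\bu\|_{L^2}. \]

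Collecting the three estimates gives $\|\bnabla p\|_{L^2}^2\le C\|p\|_{H^1}\big(\tfrac1{L_s}\|\partial_2\bu\|_{L^2}+\tfrac1\Pr\|\omega\|_{L^2}\|\omega\|_{L^r}+\Ra\|T\|_{L^2}\big)$. Finally, since $p$ has zero mean, the Poincaré inequality yields $\|p\|_{H^1}\le C\|\bnabla p\|_{L^2}$, so the left-hand side controls $\|p\|_{H^1}^2$; dividing through by $\|p\|_{H^1}$ (the case $\|p\|_{H^1}=0$ being trivial) produces \eqref{pressure}. I expect the trace estimate for the boundary term to be the only genuinely delicate step, since naive trace or interpolation bounds would cost either a half-derivative of $p$ or a full boundary norm of $\bnabla\bu$ and thereby degrade the $L_s$-dependence that \eqref{pressure} is designed to improve.
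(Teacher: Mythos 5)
Your proposal is correct and follows essentially the same route as the paper: the identity \eqref{claim1} with the cancellation of the $-\Ra\int_0^\Gamma p|_{x_2=0}\,dx_1$ term via the temperature boundary conditions, the $(q,2,r)$-H\"older/Sobolev treatment of the nonlinear term with \eqref{grad-iden}, and the Poincar\'e argument for zero-mean $p$ all match the paper's proof. Your trace estimate, built from the fundamental theorem of calculus with weights $x_2$ and $1-x_2$, is the same computation as the paper's (which tests $\partial_2(\bnabla\bcdot\bu)=0$ against $p(-1+2x_2)$, the combination of your two weights), relying identically on incompressibility, horizontal periodicity, and non-penetration, and yielding the same constant $3$.
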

}
\begin{proof}
The proof is a consequence of the following two claims:
\begin{align}
    \|\bnabla p\|_{L^2}^2=&\frac{1}{L_s}\int_0^{\Gamma}(p\partial_1u_1|_{x_2=1}+p\partial_1u_1|_{x_2=0})\, dx_1+\frac{1}{\Pra}\int_{\Omega}p\bnabla \bu^{\textup{T}}\bcolon\bnabla \bu\, dx\nonumber
    \\
    &+\Ra\int_{\Omega}\partial_2 p T\, dx,\label{claim1}
    \\
    \bigg|\int_0^{\Gamma}&(p\partial_1u_1|_{x_2=1}+p\partial_1u_1|_{x_2=0})\, dx_1\bigg|\leq 3\|p\|_{H^1}\|\partial_2 \bu\|_{L^2}.
    \label{claim2}
\end{align}

In fact, applying {the Poincaré inequality} \footnote{$p$ can be assumed to have zero mean, since $p-\langle p\rangle$ satisfies the equations \eqref{pressure_pde}} and combining \eqref{claim1} and \eqref{claim2}, we obtain
\begin{align}
    \| p\|_{H^1}^2 &\lesssim\frac{1}{L_s}\int_0^{\Gamma}(p\partial_1u_1|_{x_2=1}+p\partial_1u_1|_{x_2=0})\, dx_1+\frac{1}{\Pra}\int_{\Omega}p\bnabla \bu^T\bcolon\bnabla \bu\, dx\nonumber
    \\
    &\quad\ +\Ra\int_{\Omega}\partial_2 p T\, dx
    \label{p_est_1}
    \\
    &\lesssim L_s^{-1}\|p\|_{H^1}\|\partial_2 \bu\|_{L^2}+\Pr^{-1}\|p\|_{L^q}\|\bnabla \bu\|_{L^2}\|\bnabla \bu\|_{L^r}+\Ra\|p\|_{H^1}\|T\|_{L^2}
    \label{p_est_2}
    \\
    &\lesssim L_s^{-1}\|p\|_{H^1}\|\partial_2 \bu\|_{L^2}+\Pr^{-1}\|p\|_{H^1}\|\bnabla \bu\|_{L^2}\|\bnabla \bu\|_{L^r}+\Ra\|p\|_{H^1}\|T\|_{L^2},
    \label{p_est_3}
\end{align}\noeqref{p_est_1}%
where {$f\lesssim g$ indicates that there exists $C>0$ such that $f\leq C g$ and} $q$ and $r$ are related by $\frac 1p+\frac 1r=\frac 12$. In {\eqref{p_est_2} we used the trace estimate and in \eqref{p_est_3} the fact that $\|p\|_{L^q}\leq C \|p\|_{H^1}$ }for any $q\in (2,\infty)$ by Sobolev embedding in bounded domains.
It is left to prove the claims.

\textit{Argument for \eqref{claim1}.}
Integrating by parts and using \eqref{pressure_pde}
\begin{align}
    \|\bnabla p\|_{L^2}^2 &= \int_0^\Gamma p\partial_2 p \vert_{x_2=1} \, dx_1 -  \int_0^\Gamma p\partial_2 p \vert_{x_2=0} \, dx_1 - \int_{\Omega} p\rmDelta p \, dx
    \\
    &= \frac{1}{L_s} \int_0^\Gamma p (\partial_1 u_1 \vert_{x_2=1} + \partial_1 u_1\vert_{x_2=0}) \, dx_1 - \Ra \int_0^\Gamma p \vert_{x_2=0} \, dx_1\nonumber
    \\
    &\quad +\frac{1}{\Pra} \int_{\Omega} p\bnabla \bu^{\textup{T}}\bcolon\bnabla \bu \, dx - \Ra \int_{\Omega}\partial_2 T p \, dx
    \\
    &= \frac{1}{L_s} \int_0^\Gamma p (\partial_1 u_1 \vert_{x_2=1} + \partial_1 u_1\vert_{x_2=0}) \, dx_1 +\frac{1}{\Pra} \int_{\Omega} p\bnabla \bu^{\textup{T}}\bcolon\bnabla \bu \, dx + \Ra \int_{\Omega} T \partial_2p \, dx,
\end{align}
where in the last identity we used $ - \Ra \int_{\Omega}\partial_2 T p- \Ra \int_0^\Gamma p \vert_{x_2=0} \, dx_1=  \Ra \int_{\Omega} T \partial_2 p \, dx$ thanks to the boundary conditions for $T$.

\textit{Argument for \eqref{claim2}.}
Since $\bu$ is divergence free we have 
\begin{equation}
    0=\int_{\Omega}p(-1+2x_2)\partial_2(\bnabla \bcdot \bu)\, dx=\int_{\Omega}p(-1+2x_2)\bnabla \bcdot (\partial_2\bu)\, dx,
\end{equation}
and integration by parts yields
\begin{align}
    \int_{\Omega}p(-1+2x_2)\bnabla \bcdot (\partial_2\bu)\, dx
    &=-\int_{\Omega}\bnabla p\bcdot (-1+2x_2)\partial_2 \bu\, dx - 2\int_{\Omega}p \partial_2 u_2\, dx\nonumber
    \\
    &\quad -\left(\int_0^{\Gamma}p\partial_1 u_1|_{x_2=1}+p\partial_1 u_1|_{x_2=0}\right)\, dx_1,
\end{align}
where we used $\partial_1 u_1=-\partial_2 u_2$ by incompressibility.
Combining the two identities we obtain
\begin{align}
    \left|\int_0^{\Gamma}(p\partial_1 u_1|_{x_2=1}+p\partial_1 u_1|_{x_2=0})\, dx_1\right|
    &\leq  2\|p\|_{L^{2}}\|\partial_2 u_2\|_{L^2}+\|\partial_2\bu\|_{L^2}\|\bnabla p\|_{L^2}
    \\
    &\leq 3\|p\|_{H^1}\|\partial_2\bu\|_{L^2}\,.
\end{align}
\end{proof}

{
\begin{remark}
    \label{remark_difference_pressure_bound}
    The pressure bound in Proposition 2.7 of \cite{drivasNguyenNobiliBoundsOnHeatFluxForRayleighBenardConvectionBetweenNavierSlipFixedTemperatureBoundaries} is given by
    \begin{align}
        \|p\|_{H^1}\leq C\left( \frac{1}{L_s}\|\partial_1 \omega\|_{L^2}+\frac{1}{\Pra}\|\omega\|_{L^2}\|\omega\|_{L^r}+\Ra \|T\|_{L^2}\right)\,.
    \end{align}
    In the subsequent analysis the authors bound the term $\frac{1}{L_s}\|\partial_1 \omega\|_{L^2}$ from above with $\|\bnabla \omega\|_{L^2}$ 
    imposing the conditions that $L_s\geq 1$. In contrast, using the refined estimate \eqref{pressure}, we are able to treat any slip length $L_s>0$, improving the scaling of $\Nu$ with respect to $\Ra$.
\end{remark}
}

We conclude this section with bounds on second derivatives of the velocity field. First we relate the $L^2$-norm of $\bnabla ^2 \bu$ to the $L^2$-norm of $\bnabla \omega$.
\begin{lemma}\label{lemma_hessian_iden}
\begin{equation}\label{hessian-iden}
\|\bnabla ^2 \bu\|_{L^2}\leq\|\bnabla \omega\|_{L^2}\,.
\end{equation}
\end{lemma}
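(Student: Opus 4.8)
The plan is to reduce the claim to a comparison between the full Hessian and the Laplacian of $\bu$, and then to recognise the difference of the two as a boundary integral whose sign is fixed by the Navier-slip conditions. First I would record that, since $\bu$ is divergence free, the two scalar relations $\rmDelta u_1=-\partial_2\omega$ and $\rmDelta u_2=\partial_1\omega$ hold (the first is already used in the proof of Lemma~\ref{lemma_enstrophy}, and the second follows identically after inserting $\partial_2 u_2=-\partial_1 u_1$). Hence $\rmDelta \bu=\bnabla^{\perp}\omega$ pointwise, so that $\|\rmDelta \bu\|_{L^2}=\|\bnabla \omega\|_{L^2}$ and it suffices to prove the reduced inequality $\|\bnabla^2\bu\|_{L^2}\leq\|\rmDelta\bu\|_{L^2}$.

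For each component $u_k$ I would use the elementary pointwise identity
\begin{equation}
|\bnabla^2 u_k|^2-|\rmDelta u_k|^2=2\big((\partial_1\partial_2 u_k)^2-\partial_1^2 u_k\,\partial_2^2 u_k\big),
\end{equation}
and integrate the right-hand side by parts. Moving one $\partial_2$ and then one $\partial_1$ and using periodicity in $x_1$ to discard the vertical-edge contributions, the interior integrals cancel and only a boundary integral on the plates $x_2\in\{0,1\}$ survives:
\begin{equation}
\int_\Omega\big((\partial_1\partial_2 u_k)^2-\partial_1^2 u_k\,\partial_2^2 u_k\big)\,dx=\int_0^{\Gamma}(\partial_1 u_k\,\partial_1\partial_2 u_k)|_{x_2=1}\,dx_1-\int_0^{\Gamma}(\partial_1 u_k\,\partial_1\partial_2 u_k)|_{x_2=0}\,dx_1.
\end{equation}

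It then remains to evaluate these boundary integrals from the boundary conditions. For $u_2$ the non-penetration condition gives $u_2\equiv 0$ on each plate, so its tangential derivative $\partial_1 u_2$ vanishes there and the $k=2$ boundary term drops out. For $u_1$ I would differentiate the Navier-slip relations $\partial_2 u_1=\mp L_s^{-1}u_1$ (at $x_2=1$ and $x_2=0$ respectively) tangentially in $x_1$, obtaining $\partial_1\partial_2 u_1=\mp L_s^{-1}\partial_1 u_1$ on the corresponding plate. Substituting and recalling that the $x_2=1$ contribution enters with a plus and the $x_2=0$ one with a minus sign, both plates contribute with the same sign and the $k=1$ boundary integral equals
\begin{equation}
-\frac{1}{L_s}\int_0^{\Gamma}\big((\partial_1 u_1)^2|_{x_2=1}+(\partial_1 u_1)^2|_{x_2=0}\big)\,dx_1\leq 0.
\end{equation}
Summing over $k$ gives $\|\bnabla^2\bu\|_{L^2}^2-\|\rmDelta\bu\|_{L^2}^2\leq 0$, which is the assertion; in the free-slip case $L_s=\infty$ the surviving term vanishes and the inequality becomes an equality.

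The routine part is the double integration by parts; the only genuine points requiring care are the correct signs of the boundary terms and the justification of the integrations by parts, which needs $\bu\in H^2$ (available from the enstrophy bound together with the Calder\'on--Zygmund estimate already invoked for \eqref{grad-iden}, possibly after a standard approximation argument). The conceptual heart of the proof is the observation that the defect $\|\bnabla^2\bu\|_{L^2}^2-\|\bnabla\omega\|_{L^2}^2$ is a pure boundary quantity whose sign is dictated by the friction term in the Navier-slip condition.
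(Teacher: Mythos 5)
Your proposal is correct and takes essentially the same route as the paper: both reduce the claim to $\|\bnabla^2\bu\|_{L^2}\leq\|\rmDelta\bu\|_{L^2}$ via the identity $\rmDelta\bu=\bnabla^{\perp}\omega$, perform a double integration by parts (discarding vertical edges by periodicity), and observe that the surviving plate integrals reduce to $-\tfrac{2}{L_s}\int_0^{\Gamma}(\partial_1 u_1)^2\,dx_1\leq 0$ by the Navier-slip conditions and $u_2=0$. The only differences are bookkeeping — you integrate the pointwise defect $|\bnabla^2 u_k|^2-|\rmDelta u_k|^2$ while the paper contracts $\partial_i\partial_j u_k\partial_i\partial_j u_k$ directly, and you differentiate the slip condition tangentially where the paper substitutes it and then integrates by parts in $x_1$ — so the two arguments coincide in substance.
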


\begin{proof}
First we show that $\|\bnabla ^2 \bu\|_{L^2}\leq\|\Delta \bu\|_{L^2}$:
integrating by parts twice yields
\begin{align}
    \|\bnabla^2 \bu\|_{L^2(\Omega)}^2&=\int_{\Omega}\partial_i\partial_j u_k \partial_i\partial_j u_k\, dx
    \\
    &=\int_{\Omega}\partial_i^2 u_k\partial_j^2 u_k \, dx-\int_{\partial\Omega}\partial_i^2 u_k\partial_j u_k n_j \, dS+\int_{\partial\Omega}\partial_i\partial_j u_k\partial_j u_k n_i \, dS 
    \\
    &=\|\rmDelta \bu\|_{L^2(\Omega)}^2-\int_0^\Gamma\partial_1^2 u_k\partial_2 u_k n_2 \, dx_1+\int_0^\Gamma\partial_2\partial_1 u_k\partial_1 u_k n_2 \, dx_1\,,
    \label{hessian_part_int_twice}
\end{align}
where we used periodicity in the horizontal direction and the fact that the terms with $i=j$ cancel. Note that,  due to \eqref{nav-slip}, the boundary terms have a sign:
\begin{align}
    -\int_0^\Gamma&\partial_1^2 u_k\partial_2 u_k n_2 \, dx_1+\int_0^\Gamma\partial_2\partial_1 u_k\partial_1 u_k n_2 \, dx_1\nonumber
    \\
    &= -\int_0^\Gamma\partial_1^2 u_1\partial_2 u_1 n_2 \, dx_1+\int_0^\Gamma\partial_2\partial_1 u_1\partial_1 u_1 n_2 \, dx_1
    \\
    &= \frac{1}{L_s}\int_0^\Gamma\partial_1^2 u_1 u_1 \, dx_1-\int_0^\Gamma(\partial_1 u_1)^2\, dx_1 = -\frac{2}{L_s}\int_0^\Gamma(\partial_1 u_1)^2\, dx_1 \leq 0\,,
    \label{hessian_boundary_terms}
\end{align}
where in the last identity we used the periodicity in the horizontal direction. This proves the first claim.

Now, a direct computation yields $\rmDelta \bu=\bnabla^{\perp}\omega$, and from this follows $\|\rmDelta \bu\|_{L^2(\Omega)}=\|\bnabla\omega\|_{L^2(\Omega)}$.
We conclude that
\begin{equation}
    \|\bnabla^2 \bu\|_{L^2(\Omega)} \leq \|\rmDelta \bu\|_{L^2(\Omega)}=\|\bnabla\omega\|_{L^2(\Omega)}
\end{equation}
yielding \eqref{hessian-iden}.
\end{proof}

Next, we relate $\|\bnabla^2 \bu\|_{L^2}$ to $\Nu,\Ra$ and $L_s$, via an upper bound for $\|\bnabla \omega\|_{L^2}$. This is done by combining {\eqref{enstrophy}}, the pressure bound \eqref{pressure} and the boundary integral estimate \eqref{claim2}. {The resulting bound on the long time average of the velocity hessian together with the corresponding bound on the velocity gradient of Lemma \ref{energy} are key ingredients in order to estimate the boundary layer thickness in the proof of the main theorem.}

\begin{lemma}[Hessian bound]
Let $L_s>0$ and $\bu_0\in W^{1,4}$. Then there exists a constant $C=C(\Gamma)>0$ such that
\begin{align}\label{hessian-ta}
    \quad\big\langle\|\bnabla^2 \bu\|_{L^2}^2 \big\rangle \leq C \Big( L_s^{-2}+\frac{\max\{1,L_s^{-3}\}(\|\bu_0\|_{W^{1,4}}+\Ra)}{\Pra L_s}+L_s^{-1}\Nu^{-\frac 12}\Ra^{\frac 12}+\Ra^{\frac 12}\Big)\Nu\Ra.\quad
\end{align}
\end{lemma}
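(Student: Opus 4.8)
The plan is to bound the long-time average of $\|\bnabla^2\bu\|_{L^2}^2$ by chaining together the estimates already established in this section and reducing every space-time integral to a quantity controlled by the energy balance \eqref{energy}, the uniform enstrophy bound \eqref{boundedness-omega}, and the maximum principle \eqref{max-prin}. First, by Lemma \ref{lemma_hessian_iden} we have $\|\bnabla^2\bu\|_{L^2}\leq\|\bnabla\omega\|_{L^2}$ pointwise in time, so that (up to a factor $\Gamma$ absorbed into $C$) $\langle\|\bnabla^2\bu\|_{L^2}^2\rangle\leq C\langle\int_0^1|\bnabla\omega|^2\,dx_2\rangle$. I would then invoke the enstrophy identity \eqref{enstrophy}, which immediately produces the clean term $\Nu\Ra^{\frac32}=\Ra^{\frac12}\cdot\Nu\Ra$ together with a boundary pressure term $\frac{1}{L_s}\big|\langle p\partial_1 u_1|_{x_2=1}\rangle+\langle p\partial_1 u_1|_{x_2=0}\rangle\big|$ that carries all the remaining work.

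To control the boundary pressure term I apply, pointwise in time, the trace-type estimate \eqref{claim2} followed by the pressure bound \eqref{pressure} with the choice $r=4$, which yields
\begin{align*}
\frac{1}{L_s}\Big|\int_0^\Gamma (p\partial_1 u_1|_{x_2=1}+p\partial_1 u_1|_{x_2=0})\,dx_1\Big|\leq \frac{C}{L_s}\Big(\tfrac{1}{L_s}\|\partial_2\bu\|_{L^2}+\tfrac{1}{\Pra}\|\omega\|_{L^2}\|\omega\|_{L^4}+\Ra\|T\|_{L^2}\Big)\|\partial_2\bu\|_{L^2}.
\end{align*}
This leaves three terms to time-average. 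For the first, $\frac{C}{L_s^2}\|\partial_2\bu\|_{L^2}^2\leq\frac{C}{L_s^2}\|\omega\|_{L^2}^2$ by \eqref{grad-iden}, whose average is $\leq C L_s^{-2}\Nu\Ra$ by \eqref{energy}; this is the $L_s^{-2}$ contribution. For the second, I would first freeze the $\|\omega\|_{L^4}$ factor using the uniform-in-time bound \eqref{boundedness-omega}, reducing the remaining $\|\omega\|_{L^2}\|\partial_2\bu\|_{L^2}\leq\|\omega\|_{L^2}^2$ to a quadratic quantity whose average is again $\leq C\Nu\Ra$; this produces exactly the prefactor $\frac{\max\{1,L_s^{-3}\}(\|\bu_0\|_{W^{1,4}}+\Ra)}{\Pra L_s}$. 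For the third, the maximum principle gives $\|T\|_{L^2}\leq C(\Gamma)$, leaving a single power $\frac{C\Ra}{L_s}\|\partial_2\bu\|_{L^2}$; here Cauchy--Schwarz in the time average, $\langle f\rangle\leq\langle f^2\rangle^{\frac12}$, together with \eqref{energy} gives $\frac{C\Ra}{L_s}(\Nu\Ra)^{\frac12}=C L_s^{-1}\Nu^{-\frac12}\Ra^{\frac12}\cdot\Nu\Ra$. Collecting the four contributions and factoring out $\Nu\Ra$ yields \eqref{hessian-ta}.

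The step I expect to be delicate is the time-averaging of the boundary pressure term: the bound \eqref{pressure} is quadratic in the vorticity, so after multiplying by the boundary factor $\|\partial_2\bu\|_{L^2}$ the middle contribution is a genuine triple product of time-dependent norms that the energy identity \eqref{energy} alone cannot absorb. The essential device is to \emph{spend} the pointwise-in-time enstrophy bound \eqref{boundedness-omega} to eliminate the $\|\omega\|_{L^4}$ factor, after which everything collapses to quadratic quantities (plus the single linear term handled by Cauchy--Schwarz in time). One must also keep track of the distinction between $\langle\cdot\rangle$ applied to the $x_1$-independent quantity $\|\bnabla^2\bu\|_{L^2}^2$ and the strip averages $\langle\int_0^1\cdot\,dx_2\rangle$ appearing in \eqref{energy} and \eqref{enstrophy}, but these differ only by the harmless factor $\Gamma$ that is swallowed by the constant $C=C(\Gamma)$.
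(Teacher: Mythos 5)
Your proposal is correct and follows essentially the same route as the paper's own proof: Lemma \ref{lemma_hessian_iden} plus the enstrophy bound \eqref{enstrophy}, then the trace estimate \eqref{claim2} and the pressure bound \eqref{pressure} with $r=4$, spending the uniform-in-time bound \eqref{boundedness-omega} on the $\|\omega\|_{L^4}$ factor, absorbing the quadratic terms via \eqref{grad-iden} and \eqref{energy}, and handling the linear $\Ra\|T\|_{L^2}\|\partial_2\bu\|_{L^2}$ term with the maximum principle and Cauchy--Schwarz in the time average. All four resulting contributions match the terms in \eqref{hessian-ta} exactly as in the paper.
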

\begin{proof}
From \eqref{hessian-iden} and \eqref{enstrophy} we have 
\begin{equation}
    \left\langle \int_0^1 |\bnabla^2 \bu|^2\, dx\right\rangle\leq L_s^{-1}\left(\left\langle p\partial_1 u_1|_{x_2=1}\right\rangle+\left\langle p\partial_1 u_1|_{x_2=0}\right\rangle\right)+\Nu\Ra^{\frac 32}.
\end{equation}
Using \eqref{claim2} and \eqref{pressure} we obtain
\begin{align}
    \left\langle \int_0^1 |\bnabla^2 \bu|^2\, dx\right\rangle &\leq L_s^{-1}\langle\|p\|_{H^1}\|\partial_2 \bu\|_{L^2}\rangle+ \Nu\Ra^{\frac 32}
    \\
    &\leq L_s^{-2}\langle\|\partial_2 \bu\|_{L^2}^2\rangle+L_s^{-1}\Pr^{-1}\langle\|\omega\|_{L^2}\|\omega\|_{L^4}\|\partial_2 \bu\|_{L^2}\rangle\nonumber
    \\
    &\quad\ +L_s^{-1}\Ra \langle\|T\|_{L^2}\|\partial_2 \bu\|_{L^2}\rangle+\Nu\Ra^{\frac 32}.
    \label{hessian_bound_proof_est_1}
\end{align}
{
For the $L^4$-norm of the vorticity the pointwise (in time) bound \eqref{boundedness-omega} states
\begin{align}
   \|\omega(t)\|_{L^4}\leq C\max \left\{1, L_s^{-3}\right\}(\|\bu_0\|_{W^{1,4}}+\Ra)
\end{align}
for some constant $C>0$ depending only on $\Gamma$. Therefore, additionally using the identity \eqref{grad-iden}, \eqref{hessian_bound_proof_est_1} can be estimated as follows
}
\begin{align}
    \left\langle \int_0^1 |\bnabla^2 \bu|^2\, dx_2\right\rangle &\leq L_s^{-2}\langle\|\partial_2 \bu\|_{L^2}^2\rangle +\frac{C\max\{1,L_s^{-3}\}}{\Pra L_s}\langle\|\bnabla \bu\|_{L^2}(\|\bu_0\|_{W^{1,4}}+\Ra)\|\partial_2 \bu\|_{L^2}\rangle \nonumber
    \\
    &\quad\ +L_s^{-1}\Ra \langle\|T\|_{L^2}\|\partial_2 \bu\|_{L^2}\rangle+\Nu\Ra^{\frac 32}.
\end{align}
Finally using $\langle\|\partial_2 \bu\|_{L^2}^2\rangle\leq\langle\|\bnabla  \bu\|_{L^2}^2\rangle $, the upper bound in \eqref{energy} and the maximum principle for the temperature \eqref{max-prin} we deduce
\begin{align}
    \left\langle \int_0^1 |\bnabla^2 \bu|^2\, dx_2\right\rangle
    &\leq L_s^{-2}\Nu\Ra +\frac{C\max\{1,L_s^{-3}\}}{\Pra L_s}(\|\bu_0\|_{W^{1,4}}+\Ra)\Nu\Ra \nonumber
    \\
    &\quad\ +L_s^{-1}\Gamma^{\frac 12}\Nu^{\frac 12}\Ra^{\frac 32}+\Nu\Ra^{\frac 32}.
\end{align}
\end{proof}
\section{Proof of Theorem \ref{Main-th} }\label{sec:proof}

A crucial ingredient of the proof of Theorem \ref{Main-th} is the following interpolation bound, relating the integral of the product $u_2T$ with $\|\bnabla T\|_{L^2}$, $\|\bnabla \bu\|_{L^2}$ and $\|\bnabla^2 \bu\|_{L^2}$. Notice that, in turn, these quantities are estimated in terms of Nusselt, Rayleigh and Prandtl numbers in \eqref{Nusselt_gradT}, \eqref{energy} and \eqref{hessian-ta}.
\begin{lemma}\label{lemma-interpol}
The interpolation bound 
\begin{equation}\label{interpolation-bound}
    \frac{1}{\delta}\left\langle \int_{0}^{\delta} u_2T\,dx_2\right\rangle\leq \frac{1}{2}\left\langle \int_0^1|\partial_2 T|^2\, dx_2\right\rangle+C\delta^3\left\langle \int_0^1|\bnabla \bu|^2\, dx_2\right\rangle^{\frac 12}\left\langle \int_0^1|\bnabla^2 \bu|^2\, dx_2\right\rangle^{\frac 12}
\end{equation}
holds for some constant $C>0$.
\end{lemma}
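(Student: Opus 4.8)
The plan is to split the mixed term $\frac1\delta\langle\int_0^\delta u_2 T\,dx_2\rangle$ into a temperature contribution (which will furnish the $\frac12\langle\int_0^1|\partial_2 T|^2\rangle$ on the right) and a purely kinetic contribution scaling like $\delta^3$. The first preparatory observation is that, by incompressibility \eqref{incomp}, periodicity in $x_1$ and the non-penetration condition $u_2=0$ at $x_2\in\{0,1\}$ from \eqref{nav-slip}, the horizontal mean $\frac1\Gamma\int_0^\Gamma u_2\,dx_1$ is constant in $x_2$ and hence vanishes identically. Consequently I may subtract from $T$ its horizontal mean $\bar T(x_2)=\frac1\Gamma\int_0^\Gamma T\,dx_1$ without changing the integral, replacing $T$ by the fluctuation $T'=T-\bar T$. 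The payoff is that $T'$ inherits a homogeneous value at the bottom plate: since $T\equiv 1$ there by \eqref{temperature_bc}, one gets $T'(x_1,0)=0$. After this reduction a single Cauchy--Schwarz split on the strip $\Omega_\delta=[0,\Gamma]\times[0,\delta]$ yields $\frac1\delta\int_{\Omega_\delta}u_2 T'\le \frac1\delta\|u_2\|_{L^2(\Omega_\delta)}\|T'\|_{L^2(\Omega_\delta)}$.

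For the temperature factor I would exploit that $T'$ vanishes on $\{x_2=0\}$: the one-dimensional fundamental theorem of calculus plus Cauchy--Schwarz gives the Poincar\'e-type bound $\|T'\|_{L^2(\Omega_\delta)}\le \tfrac{\delta}{\sqrt2}\|\partial_2 T'\|_{L^2(\Omega_\delta)}$, and since the fluctuation is $L^2$-orthogonal to its horizontal mean one has $\|\partial_2 T'\|_{L^2(\Omega_\delta)}\le\|\partial_2 T\|_{L^2(\Omega)}$. Thus the whole temperature factor is controlled by $\delta\|\partial_2 T\|_{L^2(\Omega)}$, which cancels the $\delta^{-1}$ prefactor. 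A final application of Young's inequality turns the product $\tfrac{1}{\sqrt2}\|u_2\|_{L^2(\Omega_\delta)}\|\partial_2 T\|_{L^2(\Omega)}$ into $\tfrac12\|\partial_2 T\|_{L^2(\Omega)}^2$ plus a constant multiple of $\|u_2\|_{L^2(\Omega_\delta)}^2$; after the long-time average the temperature piece becomes exactly the desired $\tfrac12\langle\int_0^1|\partial_2 T|^2\rangle$.

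The heart of the argument, and the step I expect to be the main obstacle, is the kinetic estimate
\begin{equation*}
\|u_2\|_{L^2(\Omega_\delta)}^2 \le \delta^3\,\|\nabla u\|_{L^2(\Omega)}\,\|\nabla^2 u\|_{L^2(\Omega)}.
\end{equation*}
I would prove it by slicing in $x_1$ and treating $f(x_2)=u_2(x_1,x_2)$ as a one-dimensional function. The crucial structural input is that $u_2$ vanishes at \emph{both} plates, so $f(0)=f(1)=0$; this forces $\int_0^1 f'\,dx_2=0$, i.e. $\partial_2 u_2(x_1,\cdot)$ has zero vertical mean and therefore a zero, which is precisely what the sharp Agmon inequality $\|f'\|_{L^\infty(0,1)}^2\le 2\|f'\|_{L^2(0,1)}\|f''\|_{L^2(0,1)}$ requires. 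Combining $\int_0^\delta f^2\,dx_2\le \tfrac{\delta^2}{2}\|f'\|_{L^2(0,\delta)}^2\le\tfrac{\delta^3}{2}\|f'\|_{L^\infty(0,1)}^2$ with this Agmon bound gives $\int_0^\delta f^2\,dx_2\le \delta^3\|f'\|_{L^2(0,1)}\|f''\|_{L^2(0,1)}$, and integrating in $x_1$ together with Cauchy--Schwarz in $x_1$ produces the claimed inequality, after bounding $\|\partial_2 u_2\|_{L^2(\Omega)}\le\|\nabla u\|_{L^2(\Omega)}$ and $\|\partial_2^2 u_2\|_{L^2(\Omega)}\le\|\nabla^2 u\|_{L^2(\Omega)}$. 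It is exactly the double boundary condition that makes this clean: with vanishing at only one plate the linear mode $u_2\sim x_2$ would survive, no $\|\nabla^2 u\|$ factor could be extracted, and a lower-order $\delta^3\|\nabla u\|^2$ remainder would pollute the estimate.

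To finish I would insert the kinetic estimate into the Young step, take the long-time average, and apply Cauchy--Schwarz in time to pass from $\langle\|\nabla u\|_{L^2(\Omega)}\|\nabla^2 u\|_{L^2(\Omega)}\rangle$ to the product $\langle\int_0^1|\nabla u|^2\rangle^{\frac12}\langle\int_0^1|\nabla^2 u|^2\rangle^{\frac12}$ appearing in the statement, all factors of $\Gamma$ stemming from the horizontal normalization being absorbed into the constant $C$. The only regularity needed along the way is enough smoothness for the slicewise Agmon and Poincar\'e inequalities, which is guaranteed by the uniform-in-time $H^1$ (indeed $W^{1,4}$) control established in Lemmas \ref{lemma_energy} and \ref{lemma_enstrophy}.
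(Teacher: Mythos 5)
Your proposal is correct and follows essentially the same route as the paper: the zero horizontal mean of $u_2$, a shift of $T$ so that it vanishes at the bottom plate (the paper uses $\theta = T-1$ rather than the fluctuation $T-\bar T$), the slicewise Agmon-type inequality for $\partial_2 u_2$ exploiting that $u_2$ vanishes at both plates so $\partial_2 u_2$ has a zero, and a final Young plus Cauchy--Schwarz in the time average. The only differences are cosmetic (you apply Cauchy--Schwarz on the strip before the one-dimensional estimates, the paper multiplies pointwise bounds first), so nothing of substance distinguishes the two arguments.
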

\begin{proof}
First notice that, due to incompressibility $\partial_2u_2=-\partial_1u_1$ and horizontal periodicity
\begin{equation}
    \partial_2 \frac{1}{\Gamma}\int_0^\Gamma u_2(x_1,x_2)\, dx_1=-\frac{1}{\Gamma}\int_0^\Gamma \partial_1 u_1(x_1,x_2)\, dx_1 = 0,
\end{equation}
implying
\begin{align}
    \frac{1}{\Gamma}\int_0^\Gamma u_2(x_1,x_2)\, dx_1=0
    \label{u2_horizontal_average_free}
\end{align}
thanks to the boundary conditions $u_2=0$ at $x_2=\{0,1\}$.
Let $\theta(x_1,x_2):=T(x_1,x_2)-1$, then $\theta(x_1, x_2)=0$ at $x_2=0$ and by \eqref{u2_horizontal_average_free}
\begin{equation}
    \frac{1}{\delta}\left\langle \int_{0}^{\delta} u_2T\,dx_2\right\rangle=\frac{1}{\delta}\left\langle \int_{0}^{\delta} u_2\theta\,dx_2\right\rangle\,.
\end{equation}
By the fundamental theorem of calculus and the homogeneous Dirichlet boundary conditions for $\theta$ and $u_2$ we have 
\begin{align}
    |\theta(x_1,x_2)|&\leq x_2^{\frac 12}\|\partial_2\theta\|_{L^2(0,1)}\\
    |u_2(x_1,x_2)|&\leq x_2\|\partial_2u_2\|_{L^{\infty}(0,1)}\,.
\end{align}
Furthermore, since $\int_0^1\partial_2u_2(x_1,x_2)\, dx_2=0$, there exists $\xi=\xi(x_1)\in (0,1)$ such that $\partial_2u_2(x_1,\xi)=0$.
Then
\begin{equation}
    |\partial_2u_2(x_1,x_2)|^2=\left|\int_{\xi}^{x_2}\partial_2(\partial_2 u_2(x_1,z))^2\, dz\right|=\left|2\int_{\xi}^{x_2}\partial_2 u_2\partial_2^2 u_2 \,dz\right|,
\end{equation}
which implies
\begin{equation}
    |u_2(x_1,x_2)|\leq x_2\|\partial_2u_2(x_1,\cdot)\|_{L^2(0,1)}^{\frac 12}\|\partial_2^2u_2(x_1,\cdot)\|_{L^2(0,1)}^{\frac 12}\,.
\end{equation}
Combining these estimates we obtain 
\begin{align}
    \frac{1}{\delta}&\left\langle \int_0^{\delta}u_2\theta\, dx_2\right\rangle \nonumber
    \\
    &\qquad\leq \frac{1}{\delta}\left\langle \int_0^{\delta}x_2^{\frac 32} \|\partial_2u_2(x_1,\cdot)\|_{L^2(0,1)}\|\partial_2^2u_2(x_1,\cdot)\|_{L^2(0,1)}\|\partial_2\theta(x_1,\cdot)\|_{L^2(0,1)}^2\, dx_2\right\rangle
    \\
    &\qquad\leq \delta^{\frac 32}\left\langle \|\partial_2u_2(x_1,\cdot)\|_{L^2(0,1)}\|\partial_2^2u_2(x_1,\cdot)\|_{L^2(0,1)}\right\rangle^{\frac 12}\left\langle \|\partial_2\theta(x_1,\cdot)\|_{L^2(0,1)}^2 \right\rangle^{\frac 12}
    \\
    &\qquad\leq C\delta^{3}\left\langle \|\partial_2u_2(x_1,\cdot)\|_{L^2(0,1)}^2\right\rangle^{\frac 12}\left\langle \|\partial_2^2u_2(x_1,\cdot)\|_{L^2(0,1)}^2\right\rangle^{\frac 12} +\frac 12\left\langle \|\partial_2\theta(x_1,\cdot)\|_{L^2(0,1)}^2 \right\rangle.
\end{align}
\end{proof}
{\subsection{Case \texorpdfstring{$L_s=\infty$}{of free slip}}
In this section we prove the upper bound \eqref{standard} when $L_s=\infty$ in \eqref{nav-slip}, re-deriving the seminal result of \cite{whiteheadDoeringUltimateState} with a different technique. While the proof in \cite{whiteheadDoeringUltimateState} is a sophisticated application of the \textit{background field method}, our new proof for the $\Ra^{\frac{5}{12}}$-scaling is a pure PDE argument based on the combination of the localization principle together with the interpolation bound \eqref{inter}.

We first notice that, setting $L_s=\infty$ in \eqref{energy}, \eqref{grad-iden} and \eqref{enstrophy} we obtain 
\begin{alignat}{2}
    \left\langle \int_0^{1}|\omega|^2\, dx_2\right\rangle&=\left\langle \int_0^{1}|\bnabla \bu|^2\, dx_2\right\rangle&&\leq \Nu\Ra,
    \\
    \left\langle \int_0^{1}|\partial_1\omega|^2\, dx_2\right\rangle&\leq\left\langle \int_0^{1}|\bnabla \omega|^2\, dx_2\right\rangle &&\leq \Nu\Ra^{\frac 32}.
\end{alignat}
Combining these upper bounds and the identity \eqref{Nusselt_gradT} in the localization estimate \eqref{loc_intro}, we find
\begin{equation}
    \Nu\leq \frac{1}{2}\Nu +C\delta^3(\Nu\Ra)^{\frac 12}(\Ra^{\frac 32}\Nu)^{\frac 12}+\frac{1}{\delta}
\end{equation}
which yields 
\begin{equation}
    \frac 12\Nu\leq C\delta^3 \Nu\Ra^{\frac 54}+\frac{1}{\delta}\,.
\end{equation}
Optimizing in $\delta$
\begin{equation}
    \delta\sim\frac{1}{\Ra^{\frac{5}{16}}\Nu^{\frac{1}{4}}},
\end{equation}
we deduce
\begin{align}
    \Nu\lesssim\Ra^{\frac{5}{12}}.    
\end{align}}

\subsection{Case \texorpdfstring{$0<L_s<\infty$}{of finite slip}}

Using the localization of the Nusselt number \eqref{localization_2} and \eqref{interpolation-bound} we have
\begin{align}
    \Nu&\leq \frac{1}{\delta}\left\langle \int_{0}^{\delta} u_2T\,dx_2\right\rangle+\frac{1}{\delta}
    \\
    &\leq \frac{1}{2}\left\langle \int_0^{1}|\bnabla T|^2\, dx_2\right\rangle+C\delta^3\left\langle \int_0^{1}|\bnabla \bu|^2\, dx_2\right\rangle^{\frac 12}\left\langle \int_0^{1}|\bnabla^2 \bu|^2\, dx_2\right\rangle^{\frac 12}+\frac{1}{\delta}
\end{align}
yielding 
\begin{equation}
    \frac 12\Nu\leq C\delta^3\left\langle \int_0^{\delta}|\bnabla \bu|^2\, dx_2\right\rangle^{\frac 12}\left\langle \int_0^{1}|\bnabla^2 \bu|^2\, dx_2\right\rangle^{\frac 12}+\frac{1}{\delta},
\end{equation}
where we used \eqref{Nusselt_gradT}.
Finally we insert the bounds \eqref{energy} and \eqref{hessian-ta} in the last inequality and, if $\Ra$ is large enough such that $\|\bu_0\|_{W^{1,4}}\lesssim \Ra$ then, up to redefining constants, we find
\begin{equation}\label{Up-bound-standard}
    \Nu\leq C\delta^3\Big(L_s^{-1}\Nu\Ra+\max\{1,L_s^{-\frac{3}{2}}\}L_s^{-\frac{1}{2}}\Pr^{-\frac{1}{2}}\Nu\Ra^{\frac 32}+L_s^{-\frac{1}{2}}\Nu^{\frac 34}\Ra^{\frac 54}+\Nu\Ra^{\frac 54}\Big)+\frac{2}{\delta}.
\end{equation}
We first cover the case $L_s\geq 1$. Then, as $L_s,\Nu\geq 1$, the first and third terms on the right-hand side of \eqref{Up-bound-standard} are dominated by $\Nu\Ra^{\frac 54}$, hence
\begin{align}
    \Nu\leq C\delta^{3}\Nu\left(L_s^{-\frac{1}{2}}\Pr^{-\frac{1}{2}}\Ra^\frac{3}{2}+\Ra^\frac{5}{4}\right)+ 2\delta^{-1}
\end{align}
and optimizing by setting $\delta = \Nu^{-\frac{1}{4}}\Big(L_s^{-\frac{1}{2}}\Pr^{-\frac{1}{2}}\Ra^\frac{3}{2}+\Ra^\frac{5}{4}\Big)^{-\frac{1}{4}}$ implies
\begin{align}
    \Nu \lesssim L_s^{-\frac{1}{6}}\Pr^{-\frac{1}{6}}\Ra^\frac{1}{2}+\Ra^\frac{5}{12}.
\end{align}
If instead $L_s<1$, then \eqref{Up-bound-standard} is given by
\begin{align}
    \Nu\leq C\delta^3\Big(L_s^{-1}\Nu\Ra+L_s^{-2}\Pr^{-\frac{1}{2}}\Nu\Ra^{\frac 32}+ L_s^{-\frac{1}{2}}\Nu^{\frac 34}\Ra^{\frac 54}+\Nu\Ra^{\frac 54}\Big)+2 \delta^{-1}.    
\end{align}
Optimizing by setting 
\begin{equation}
    \delta=
    \begin{cases}
        \Nu^{-\frac{1}{4}}\left(L_s^{-1}\Ra+L_s^{-2}\Pra^{-\frac12} \Ra^{\frac 32}+\Ra^{\frac 54}\right)^{-\frac{1}{4}} & \mbox{ if } L_s^{-\frac{1}{2}}\Ra^{-\frac{1}{4}}+L_s^{-\frac{3}{2}}\Pra^{-\frac 12}\Ra^{\frac 14}+L_s^\frac{1}{2} \geq \Nu^{-\frac{1}{4}}\\
        L_s^{\frac{1}{8}}\Nu^{-\frac{3}{16}}\Ra^{-\frac{5}{16}} & \mbox{ if } L_s^{-\frac{1}{2}}\Ra^{-\frac{1}{4}}+L_s^{-\frac{3}{2}}\Pra^{-\frac 12}\Ra^{\frac 14}+L_s^\frac{1}{2} \leq \Nu^{-\frac{1}{4}}
    \end{cases}
\end{equation}
yields
\begin{equation}
    \Nu \lesssim L_s^{-\frac{1}{3}}\Ra^\frac{1}{3} + L_s^{-\frac{2}{3}} \Pr^{-\frac{1}{6}}\Ra^\frac{1}{2}+ L_s^{-\frac{2}{13}}\Ra^\frac{5}{13}+\Ra^\frac{5}{12}.
\end{equation}
{\hspace*{\fill}\qedsymbol}
{
\section{Conclusion}\label{section_conclusion}

Our work is motivated by recent investigations in \cite{whiteheadDoeringUltimateState}, \cite{drivasNguyenNobiliBoundsOnHeatFluxForRayleighBenardConvectionBetweenNavierSlipFixedTemperatureBoundaries} indicating that variations in the boundary conditions for the velocity affect heat transport properties in Rayleigh-B\'enard convection. The Navier-slip boundary conditions are prescribed in the situation in which some slip occurs on the surface and, at the same time, stress is exerted on the fluid. In particular, the use of these boundary conditions is justified when the scale of interest goes down to micron or below, when the no-slip boundary conditions cease to be valid (see \cite{choi2006large}). Mathematically these boundary conditions are referred to as "interpolation" boundary conditions since they represent a situation in between the physically relevant, but very difficult to treat, no-slip boundary conditions and the easier, but unphysical free-slip boundary conditions. When free-slip boundary conditions are considered, the key tool in order to control the growth of the vertical velocity $u_2$ near the boundaries is the enstrophy balance \cite{whiteheadDoeringUltimateState, DoeringWhitehead12rigid, WangWhitehead13}. When Navier-slip boundary condition are considered instead, the control of enstrophy production becomes difficult since the pressure term and the vertical derivative of $u_2$, $\partial_2u_2=-\partial_1 u_1$, appear at the boundary, see \eqref{enstrophy}. This problem was already tackled in \cite{drivasNguyenNobiliBoundsOnHeatFluxForRayleighBenardConvectionBetweenNavierSlipFixedTemperatureBoundaries} and, in this paper, we improve this control by refining the trace estimates, see \eqref{claim2}. In \cite{bleitnerNobili24} the analysis in \cite{drivasNguyenNobiliBoundsOnHeatFluxForRayleighBenardConvectionBetweenNavierSlipFixedTemperatureBoundaries} was extended to \textit{rough} boundary conditions, capturing the dependency of the scaling from the spatially varying friction coefficient and curvature. To our knowledge, no other rigorous results are available for the Nusselt number in this set up. 

In the same setting considered in the present paper, direct numerical simulations (DNS) were performed in \cite{huang2022effect} to study the scaling of the Nusselt number in the convective roll state and in the zonal flow (turbulent state). The authors found that, when $\frac{L_s}{\lambda_0}\lesssim 10$ (here the slip-length is normalized by $\lambda_0$, the thermal boundary layer thickness for the no-slip plates) then $\Nu\sim \Ra^{0.31}$ in the convection roll state. In particular they observe that, in the convection roll state, for a fixed $\Ra$, the heat transfer $\Nu$ increases with increasing $\frac{L_s}{\lambda_0}$. On the other hand, they observe that \textit{in the zonal flow the heat transport $\Nu$ decreases with increasing $\frac{L_s}{\lambda_0}$ at fixed $\Ra$}. They explain this phenomena as due to the reduction of the vertical Reynolds number: as $\frac{L_s}{\lambda_0}$ increases, the zonal flow becomes stronger, and it suppresses the vertical velocity, leading to a decrease in $\Nu$. In this regime their DNS indicate the scaling $\Nu\sim \Ra^{0.16}$. 
As it can be noticed in Table \ref{table:overview_of_the_results}, our rigorous upper bounds indicate a behaviour in agreement with the results in \cite{huang2022effect}: in the turbulent regime, as the slip length $L_s$ increases, the heat transport decreases. In fact, observe that, in each $\Pr$ region, the heat transport increases from 
$\Nu\lesssim \Ra^{\frac{5}{12}}$ when $L_s=\infty$ to $\Nu\lesssim  L_s^{-\frac 23}\Pr^{-\frac 16}\Ra^{\frac{1}{2}}$ when $L_s<1$ (see Table \ref{table:overview_of_the_results} where the upper bounds are ordered from the smallest to the largest, in each $Pr$-region). Let us recall that for no-slip boundary conditions ($L_s=0$) Otto, Choffrut and the second author of this paper find $\Nu\lesssim (\Ra\ln(\Ra))^{\frac 13}$ when $\Pr\gtrsim (\Ra\ln(\Ra))^{\frac 13}$ and $\Nu\lesssim \Pr^{-\frac 12}(\Ra\ln(\Ra))^{\frac 12}$ when $\Pr\lesssim (\Ra\ln(\Ra))^{\frac 13}$ \cite{choffrutNobiliOttoUpperBoundsOnNusseltNumberAtFinitePrandtlNumber}, while Doering and Constantin in  \cite{doering1996variational} prove $\Nu\lesssim \Ra^{\frac{1}{2}}$ uniformly in $\Pr$. In the regime of small $L_s$ our upper bound is $\Nu\lesssim L_s^{-\frac 23}\Pr^{-\frac 16}\Ra^{\frac12}$ for \textit{any} Prandtl number. We cannot directly compare the results in the present paper with the results available for no-slip boundary conditions in \cite{choffrutNobiliOttoUpperBoundsOnNusseltNumberAtFinitePrandtlNumber, doering1996variational} since our analysis brakes down in the limit $L_s\rightarrow 0$. 
With respect to the long-standing open problem regarding the scaling laws for the Nusselt number in the "ultimate state" \cite{zhu2018transition, doering2019absence, zhu2019zhu, doering2020absence} we can say the following: 
on the one hand, the DNS in \cite{huang2022effect} and our analysis seem to suggest that, for any Prandtl number region and any $L_s\neq 0$, the Malkus's scaling $\Nu\sim\Ra^{\frac{1}{3}}$ is not achievable in the \textit{turbulent regime}. This is quite evident when looking at the results in Table \ref{table:overview_of_the_results}, since (as mentioned before) the upper bounds are ordered and increase from $\Ra^{\frac{5}{12}}$ as $L_s$ decreases. However we warn that the reality of heat transport might be much more complicated than this nice picture. 
With our arguments we are only able to derive \textit{upper bounds} and cannot exclude the possibility of smaller scaling exponents. Indeed one of the biggest theoretical challenges for this problem is to find (non trivial) lower bounds for the Nusselt number. Even just exhibiting solutions that attain certain upper bounds in some simplified situation (in the spirit of \cite{souza2015maximal}) would be a big step ahead in the theory. Interesting ideas in this direction have been developed by Tobasco and Doering: in \cite{tobasco2017optimal, tobasco2022},  inspired by problems arising in the study of energy-driven pattern formation in materials science, the authors design a two dimensional "branching" flow that  transports at rate $\Nu\sim {\rm{Pe}}^{\frac 23}$ (up to a logarithmic correction) for a passive tracer which diffuses and is advected by a divergence-free velocity field with a fixed enstrophy budget $\langle\|\nabla \mathbf{u}\|_{L^2}^2\rangle\sim {\rm{Pe}}^2$. Here ${\rm{Pe}}$ is the Peclet number. The remarkable lower bound they derive using "branching" techniques is responsible for the logarithmic corrections. Although their result in particular shows that the $\Ra^{\frac 12}$ "ultimate" scaling is attained by flows that do not solve  \eqref{navier-stokes}--\eqref{eq:T}, these arguments can be used to detect mechanisms that make the scalings $\rm{Ra}^{\frac 12}$ and  $\rm{Ra}^{\frac 13}$ realizable for buoyancy driven solutions.

\begin{figure}
    \centerline{\includegraphics[scale=0.06]{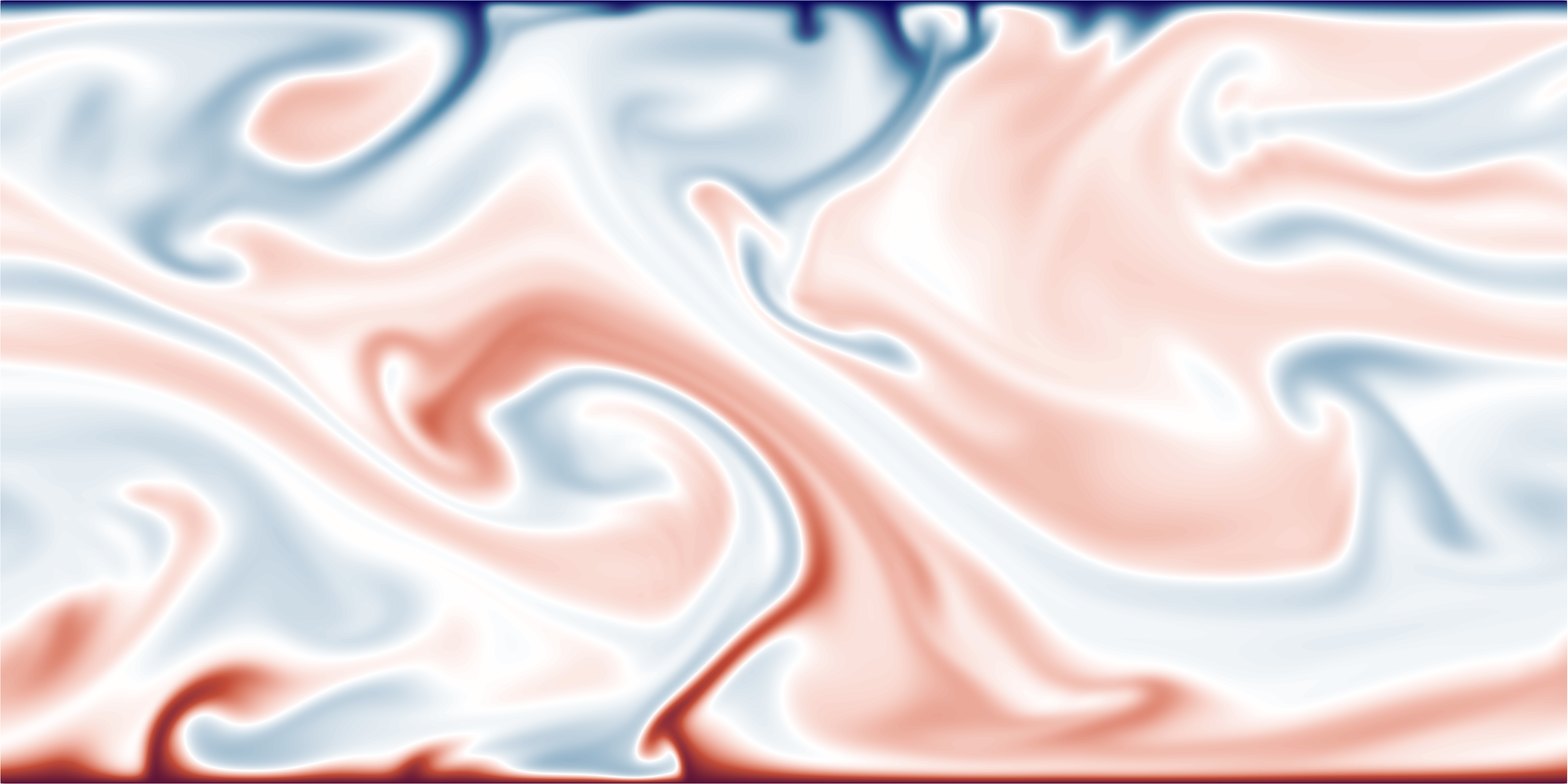}}
    \caption{Thermal boundary layers arising at $\Pra=1$, $L_s=1$ and $\Ra=10^8$. This snapshot is generated by a simulation.}
    \label{fig:simulation}
\end{figure}

Finally we want to conclude with a remark.
Our upper bounds on the Nusselt number quantify heat transport properties of the (thermal) boundary layer without characterizing it. The only information we can extrapolate from our analysis is about the (thermal) boundary layer thickness (see Figure \ref{fig:simulation}).
On the other hand, in the excellent work in \cite{gie2019boundary}, Gie and Whithehead are able to describe the flow in the boundary layer. In a $3D$ periodic channel, they study the Boussinesq system at very small viscosities and with Navier-slip boundary conditions. Through the explicit construction of a corrector, they show that the behaviour of the dynamics in the boundary layers is governed by the Prandtl equations. In particular these equations can be linearized for Navier-slip boundary conditions, implying that, in this specific case, the boundary layers are non-turbulent. From this the authors deduce that the "ultimate" state, which is based on the hypothesis of a turbulent boundary layer \cite{ahlers2009heat}, can not exist in this particular setup.}

\section*{Aknowledgements} The authors acknowledge the support by the Deutsche Forschungsgemeinschaft (DFG) within the Research Training Group GRK 2583 "Modeling, Simulation and Optimization of Fluid Dynamic Applications”.

\section*{Declaration of Interests} 
The authors report no conflict of interest.

\bibliographystyle{jfm}
\bibliography{bibliography}
\end{document}